\title{Difference modules and difference cohomology}
\author[M. CHA{\L}UPNIK]{Marcin Cha{\l}upnik$^{\dagger}$}
\thanks{$^{\dagger}$ Supported by the Narodowe Centrum Nauki  grant no. 2015/19/B/ST1/01150}
\address{$^{\dagger}$ Instytut Matematyki\\Uniwersytet Warszawski\\Warszawa\\Poland}
\email{mchal@mimuw.edu.pl}
\author[P. KOWALSKI]{Piotr Kowalski$^{\spadesuit}$}
\thanks{$^{\spadesuit}$ Supported by the Narodowe Centrum Nauki grants no. 2015/19/B/ST1/01150 and  2015/19/B/ST1/01151.}
\address{$^{\spadesuit}$ Instytut Matematyczny\\
Uniwersytet Wroc{\l}awski\\
Wroc{\l}aw\\
Poland}
\email{pkowa@math.uni.wroc.pl} \urladdr{http://www.math.uni.wroc.pl/\textasciitilde pkowa/ }
\thanks{2010 \textit{Mathematics Subject Classification} 12H10, 14L15, 20G05.}
\thanks{\textit{Key words and phrases}. Rational cohomology, difference algebraic group, difference cohomology.}
\DeclareMathOperator{\colim}{colim}
\DeclareMathOperator{\gl}{GL}  \DeclareMathOperator{\id}{id}
 \DeclareMathOperator{\fr}{Fr} 
  \DeclareMathOperator{\fix}{Fix}
 \DeclareMathOperator{\alg}{alg}
\DeclareMathOperator{\rat}{rat}
\DeclareMathOperator{\homm}{Hom}\DeclareMathOperator{\modd}{Mod}\DeclareMathOperator{\ext}{Ext}
\DeclareMathOperator{\Mod}{Mod}
\DeclareMathOperator{\Alg}{Alg}\DeclareMathOperator{\Hom}{Hom}
\newtheorem{theorem}{Theorem}[section]
\newtheorem{prop}[theorem]{Proposition}
\newtheorem{lemma}[theorem]{Lemma}
\newtheorem{cor}[theorem]{Corollary}
\theoremstyle{definition}
\newtheorem{definition}[theorem]{Definition}
\newtheorem{example}[theorem]{Example}
\newtheorem{remark}[theorem]{Remark}
\begin{document}
\newcommand{\lili}{\underleftarrow{\lim }}
\newcommand{\coco}{\underrightarrow{\lim }}
\newcommand{\twoc}[3]{ {#1} \choose {{#2}|{#3}}}
\newcommand{\thrc}[4]{ {#1} \choose {{#2}|{#3}|{#4}}}
\newcommand{\Zz}{{\mathds{Z}}}
\newcommand{\Ff}{{\mathds{F}}}
\newcommand{\Cc}{{\mathds{C}}}
\newcommand{\Rr}{{\mathds{R}}}
\newcommand{\Nn}{{\mathds{N}}}
\newcommand{\Qq}{{\mathds{Q}}}
\newcommand{\Kk}{{\mathds{K}}}
\newcommand{\Pp}{{\mathds{P}}}
\newcommand{\ddd}{\mathrm{d}}
\newcommand{\Aa}{\mathds{A}}
\newcommand{\dlog}{\mathrm{ld}}
\newcommand{\ga}{\mathbb{G}_{\rm{a}}}
\newcommand{\gm}{\mathbb{G}_{\rm{m}}}
\newcommand{\gaf}{\widehat{\mathbb{G}}_{\rm{a}}}
\newcommand{\gmf}{\widehat{\mathbb{G}}_{\rm{m}}}
\newcommand{\ka}{{\bf k}}
\newcommand{\ot}{\otimes}
\newcommand{\si}{\mbox{$\sigma$}}
\newcommand{\ks}{\mbox{$({\bf k},\sigma)$}}
\newcommand{\kg}{\mbox{${\bf k}[G]$}}
\newcommand{\ksg}{\mbox{$({\bf k}[G],\sigma)$}}
\newcommand{\ksgs}{\mbox{${\bf k}[G,\sigma_G]$}}
\newcommand{\cks}{\mbox{$\mathrm{Mod}_{({A},\sigma_A)}$}}
\newcommand{\ckg}{\mbox{$\mathrm{Mod}_{{\bf k}[G]}$}}
\newcommand{\cksg}{\mbox{$\mathrm{Mod}_{({A}[G],\sigma_A)}$}}
\newcommand{\cksgs}{\mbox{$\mathrm{Mod}_{({A}[G],\sigma_G)}$}}
\newcommand{\catgrs}{\mbox{$\modd_{{\bf G}}^{\sigma}$}}
\newcommand{\crats}{\mbox{$\mathrm{Mod}^{\rat}_{(\mathbf{G},\sigma_{\mathbf{G}})}$}}
\newcommand{\crat}{\mbox{$\mathrm{Mod}_{\mathbf{G}}$}}
\newcommand{\cratinv}{\mbox{$\mathrm{Mod}^{\rat}_{\mathbb{G}}$}}
\newcommand{\ra}{\longrightarrow}
%\mbox{\rule{8pt}{8pt}}\vspace{0.3cm}\newline}

\begin{abstract}
We give some basics about homological algebra of difference representations. We consider both the difference discrete and the difference rational case. We define the corresponding cohomology theories and show the existence of spectral sequences relating these cohomology theories with the standard ones.
\end{abstract}

\maketitle

\section{Introduction}
In this article, we initiate a systematic study of module categories in the context of difference algebra. Our set-up is as follows. We call an object, such as a ring, a group or an affine group scheme, \emph{difference} when it is additionally equipped with an endomorphism. Hence  a difference ring is just a ring together with a ring endomorphism etc. \emph{Difference algebra} (that is, the theory of difference rings) was initiated by Ritt and developed further by Cohn (see \cite{cohn}). This general theory was motivated by the theory of \emph{difference equations} (they may be considered as a discrete version of differential equations).

We introduce and investigate a suitable category of representations of difference (algebraic) groups which takes into account the extra difference structure.
As far as we know, this quite natural field of research was explored only in \cite{Kam} and \cite{Wib1}.
We discuss the relation between our approach and the one from \cite{Kam} and \cite{Wib1} in Section \ref{secwib}.

We start with discussing the most general case of the category of difference modules over a difference ring in some detail (see Section \ref{secaem}).
However, in the further part of the paper we concentrate on the theory of difference  representations
of a difference group and the parallel (yet more complicated) theory of
difference  representations of difference affine  group schemes.
The emphasis is put on developing the rudiments of homological algebra in these contexts, since our main motivation for studying difference representations is our idea of using difference
language for comparing cohomology of affine group schemes and discrete groups.
Let us now outline our program (further details can be found in Section \ref{ccfs}).

The basic idea is quite general. The  Frobenius morphism extends
to a self--transformation of the identity functor on the category of schemes over
$\Ff_p$. Thus schemes over $\Ff_p$ can be naturally regarded as difference objects. We shall apply this approach to the classical problem of comparing rational and discrete cohomology of affine group schemes defined
over $\Ff_p$. The main result in this area \cite{ratgen} establishes for a reductive algebraic group $\mathbf{G}$ defined over $\Ff_p$ an isomorphism between a certain limit of its rational cohomology groups (called the \emph{stable rational cohomology} of $\mathbf{G}$) and the
discrete cohomology of the group of its $\bar{\Ff}_p$--rational points
(for details, see Section \ref{ccfs}). The main results of our paper (Theorems \ref{disctelthm} and \ref{rasctelthm})
provide an interpretation of stable cohomology in terms of difference cohomology.
Thus, the stable cohomology which was defined ad hoc as a limit is interpreted here
as a genuine right derived functor in the difference framework. We hope to use this
interpretation in a future work which aims to generalize the main theorem of \cite{ratgen} to the case of non--reductive
group schemes. We also hope that this point of view together with Hrushovski's theory of generic Frobenius \cite{HrFro} may lead to an independent and more conceptual proof of the main theorem of \cite{ratgen}. We provide more details of our program in Section \ref{ccfs}.

To summarize, the aim of our article is twofold. Firstly, we develop some basics of
module theory and homological algebra in the difference setting. We believe that some interesting phenomena
already can be observed at this stage. For example, in Remark \ref{lvsr} we point out a striking asymmetry between left and right
difference modules, and in Section \ref{ccfs} we discuss the role of the process of inverting endomorphism.
Thus we hope  that our work will encourage further research in this subject.
Secondly,  we provide a formal framework for applying
difference algebra to homological problems in algebraic geometry in the case of positive characteristic. We hope to use the tools we have worked out in the present paper
in our future work exploring the relation between homological invariants of schematic and discrete objects.

The paper is organized as follows. In Section 2, we collect necessary facts about (non-commutative) difference rings. In Section 3, we deal with the difference discrete cohomology and in Section 4, we consider the difference rational cohomology. In Section 5, we compare our theory with the existing ones and with the theory of spectra from \cite{Chal1}, and we also briefly describe another version of the notion of a difference rational representation (see Definition \ref{defotther}).

We would like to thank the referee for her/his careful reading of our paper and many useful suggestions.

\section{Difference rings and modules}\label{secaem}
In this section, we introduce a suitable module category for difference rings. The theory of difference modules over commutative difference rings has been already considered (see e.g.  \cite[Chapter 3]{levin}),
however our approach is  different than the one from \cite{levin} (we summarize the differences in Remark \ref{lev}).
We recall that  a \emph{difference ring} is a pair $(R,\sigma)$, where $R$ is a ring with a unit (not necessarily commutative), and $\sigma:R\to R$ is a ring homomorphism preserving the unit. A \emph{homomorphism of difference rings} is a ring homomorphism commuting with the distinguished endomorphisms.

Let $(R,\sigma)$ be a difference ring. We call a pair $(M,\sigma_M)$ a \emph{left  difference $(R,\sigma)$--module} if it consists of a left $R$--module $M$ with an additive map $\sigma_M: M\ra M$ satisfying the  condition
\begin{equation}
\sigma_M(\sigma(r)\cdot m)=r\cdot \sigma_M(m),\tag{$\dagger$}
\end{equation}
for any $r\in R$ and $m\in M$ (we explain why did we choose such a condition in Remark \ref{lvsr}). The condition $(\dagger)$ can be concisely
rephrased as saying that the map
\[\sigma_M: M^{(1)}\ra M\]
is a homomorphism of $R$--modules, where $M^{(1)}$ stands for $M$ with the $R$--module structure twisted  by $\sigma$, i.e. $r\cdot m:=\sigma(r)\cdot m$, where $r\in R$ and $m\in M$. The  left difference $(R,\sigma)$--modules form a category  with the morphisms being
the $R$--homomorphisms commuting with the fixed additive endomorphisms satisfying $(\dagger)$.

We have a parallel notion of a \emph{right difference $(R,\sigma)$--module}. This time it
is a right  $R$--module $M$ with an additive map $\sigma_M: M\ra M$
satisfying the condition
\begin{equation}
\sigma_M(m\cdot r)=\sigma_M(m)\cdot \sigma(r)\tag{$\dagger'$},
\end{equation}
which, in terms of the induced $R$--modules, means that the map
\[\sigma_M: M\ra M^{(1)}\]
is $R$--linear.

These categories can be interpreted as genuine module categories, which we explain below. We define the ring of twisted polynomial $R[\sigma]$ as follows.
The underlying Abelian group is the same as in the usual polynomial ring
$R[t]$. However, the multiplication is given by the formula
$$\left(\sum t^ir_i\right)\cdot \left(\sum t^jr'_j\right):=\sum_nt^n\left(\sum_{i+j=n}\sigma^j(r_i)r_j'\right).$$
Then we have the following.
\begin{prop}\label{leftright}
The category of left (resp. right) difference $(R,\sigma)$--modules is equivalent (even isomorphic) to the category of left (resp. right) $R[\sigma]$--modules.
\end{prop}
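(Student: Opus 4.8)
The plan is to construct an explicit isomorphism of categories in each case (left and right) by exhibiting a bijective correspondence between the data of a difference module and the data of an $R[\sigma]$--module, and then checking that this correspondence is compatible with morphisms. I will treat the left case in detail; the right case is symmetric. The key observation is that an $R[\sigma]$--module is the same thing as an abelian group on which both $R$ and the distinguished variable $t$ act, subject to the relations encoded in the twisted multiplication. Since $R \subseteq R[\sigma]$ as the degree-zero part, any left $R[\sigma]$--module $M$ restricts to a left $R$--module, and the action of $t$ supplies an additive endomorphism $\sigma_M := (m \mapsto t \cdot m)$.

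The heart of the argument is to show that the single nontrivial relation in $R[\sigma]$, namely $t \cdot r = \sigma(r) \cdot t$ (read off from the multiplication formula with $i=1$, $j=0$ versus $i=0$, $j=1$), translates precisely into the compatibility condition $(\dagger)$. First I would verify that for a left $R[\sigma]$--module, the identity $t \cdot (r \cdot m) = (t r) \cdot m = (\sigma(r) t) \cdot m = \sigma(r) \cdot (t \cdot m)$ holds, so that $\sigma_M(r \cdot m) = \sigma(r) \cdot \sigma_M(m)$. To match $(\dagger)$ as written, I substitute $\sigma(r)$ for $r$, obtaining $\sigma_M(\sigma(r) \cdot m) = \sigma(\sigma(r)) \cdot \sigma_M(m)$, which is not literally $(\dagger)$; the correct reading is that $(\dagger)$ states exactly $\sigma_M(\sigma(r) \cdot m) = r \cdot \sigma_M(m)$, so I would instead read the relation as $t \cdot \sigma(r) = r \cdot t$ in $R[\sigma]$ and confirm this is what the multiplication formula yields. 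Thus the direction of twisting in the ring and in $(\dagger)$ must be carefully aligned: the relation $t \sigma(r) = r t$ gives precisely $\sigma_M(\sigma(r) \cdot m) = r \cdot \sigma_M(m)$. Conversely, given a difference module $(M, \sigma_M)$ satisfying $(\dagger)$, I would define the $R[\sigma]$--action by $\left(\sum t^i r_i\right) \cdot m := \sum \sigma_M^i(r_i \cdot m)$ and check that the twisted multiplication is respected, so that this is a genuine module structure.

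After establishing the object-level bijection, I would check that it is functorial and that it respects morphisms in both directions. A morphism of difference $(R,\sigma)$--modules is an $R$--linear map commuting with the $\sigma_M$, and this is exactly an $R$--linear map commuting with the action of $t$, hence an $R[\sigma]$--linear map; the two conditions are manifestly equivalent. Since the two constructions (restricting an $R[\sigma]$--module and building an $R[\sigma]$--module from a difference module) are mutually inverse on objects and act as the identity on the underlying sets of morphisms, they assemble into an isomorphism of categories, not merely an equivalence. The right case proceeds identically, using the relation $\sigma(r) t = t r$ extracted from the same multiplication formula, which matches $(\dagger')$.

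The main obstacle I anticipate is purely bookkeeping: getting the direction of the twist consistent between the definition of the multiplication in $R[\sigma]$ (where the formula places $\sigma^j$ on $r_i$), the twist convention in $(\dagger)$ versus $(\dagger')$, and the left-versus-right module structures. The striking asymmetry between the left and right cases flagged in Remark~\ref{lvsr} suggests that one of the two verifications is cleaner than the other, and I would expect the noncommutativity of $R$ combined with the noncentrality of $t$ to require care in checking associativity of the constructed action; everything else is a routine, if slightly tedious, check that the two functors invert each other.
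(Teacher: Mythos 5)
Your proposal follows the same route as the paper: make $R[\sigma]$ act via $\left(\sum t^i r_i\right)\cdot m := \sum\sigma_M^i(r_i\cdot m)$, recover $\sigma_M$ from a module structure as the action of $t$, observe that morphisms correspond exactly (an $R$--linear map commutes with $\sigma_M$ iff it commutes with $t$), and note the two constructions are mutually inverse on the nose, giving an isomorphism of categories. Your treatment of the left case, after your mid-course correction, is right: with the paper's convention (coefficients written to the right of the powers of $t$), the product formula gives $(t^0r)\cdot(t^1\cdot 1)=t\,\sigma(r)$ and $(t^1\cdot 1)\cdot(t^0 r)=t\,r$, so the defining relation of $R[\sigma]$ is $rt=t\sigma(r)$, and for a left module this yields $\sigma_M(\sigma(r)\cdot m)=(t\sigma(r))\cdot m=(rt)\cdot m=r\cdot\sigma_M(m)$, which is exactly $(\dagger)$.

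However, your final claim about the right case is wrong as stated. You assert that the right case uses ``the relation $\sigma(r)t=tr$.'' That identity is false in $R[\sigma]$: the product formula gives $\sigma(r)\cdot t=t\,\sigma^2(r)$ while $t\cdot r=t\,r$, and these differ unless $\sigma^2(r)=r$. Moreover, even if such a relation did hold, it would translate into the right-module analogue of $(\dagger)$, namely $\sigma_N(n\cdot\sigma(r))=\sigma_N(n)\cdot r$, not into $(\dagger')$. The right case in fact uses the \emph{same} relation $rt=t\sigma(r)$ as the left case, just read from the other side: for a right $R[\sigma]$--module $N$ with $\sigma_N(n):=n\cdot t$, one computes $\sigma_N(n\cdot r)=n\cdot(rt)=n\cdot(t\sigma(r))=\sigma_N(n)\cdot\sigma(r)$, which is exactly $(\dagger')$. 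There is only one commutation relation in the ring; the asymmetry between left and right modules (the one flagged in Remark \ref{lvsr}) comes from which side of that single relation interacts with the module structure, not from two different relations. With this correction your argument is complete and coincides with the paper's proof.
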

\begin{proof} Let $M$ be a left difference $R$--module. Then we equip $M$ with
a structure of a left $R[\sigma]$--module by putting
\[\left(\sum t^ir_i\right)\cdot m:=\sum \sigma_M^i(r_i\cdot m).\]
The condition $(\dagger)$ ensures that the commutativity relation in $R[\sigma]$
is respected.  Conversely, for a left $R[\sigma]$--module $N$, we define $\sigma_N$ by
the formula
\[\sigma_N(n):=t\cdot n.\]
Then $\sigma_N:N\to N$ is clearly additive and satisfies $(\dagger)$. The proof for the right modules is similar.
\end{proof}
\begin{remark}\label{lev}
	We summarize here how our definition of a difference module differs from the one in \cite{levin}.
	\begin{enumerate}
		\item Our  base ring of twisted polynomials (defined above) corresponds to the \emph{opposite ring} to the ring of difference operators $\mathcal{D}$  considered in \cite[Chapter 3.1]{levin}. Hence the left difference modules considered in \cite{levin} correspond to our right difference modules.
		
		\item A possible notion of a \emph{right} difference modules (which would correspond to our left difference modules, the choice on which we focus in this paper) is not considered in \cite{levin}.
	\end{enumerate}
\end{remark}
We should warn the reader that the categories of left and right difference modules behave quite differently. For example, since $\sigma: R\ra R^{(1)}$
may be thought of  as a map of $R$--modules,
 $R$ with $\sigma_R:=\sigma$ is a right difference $(R,\sigma)$--module. If $\sigma$
is an automorphism, then obviously $R$ with $\sigma_R:=\sigma^{-1}$ is a left
difference $(R,\sigma)$--module. However, in the general case we do not have any natural
structure of a left difference $(R,\sigma)$--module on $R$.  Since in this paper we are mainly interested in left difference $(R,\sigma)$--modules (a technical explanation is provided in Remark \ref{lvsr}), we would like to construct
a left difference $(R,\sigma)$--module possibly closest to $R$. We achieve this goal by formally inverting the action of $\sigma$ on $R$.
\begin{definition}\label{defrtil}
Let
\[\mathrm{R}_{1-t}: R[\sigma]\ra R[\sigma]\]
be the right multiplication by $(1-t)$. This is clearly a map of left $R[\sigma]$--modules and we define the following left $R[\sigma]$--module:
\[\widetilde{R}:=\mbox{coker}(\mathrm{R}_{1-t}).\]
\end{definition}
Our construction has the following properties.
\begin{prop}\label{rtilda}
Let $\sigma_{\widetilde{R}}$ be the map provided by Proposition \ref{leftright}. Then we have the following.
	\begin{enumerate}
		\item The map $\sigma_{\widetilde{R}}$ is invertible.
		\item If $\sigma$ is an automorphism, then:
$$\left(\widetilde{R},\sigma_{\widetilde{R}}\right)\simeq \left(R,\sigma^{-1}\right).$$
	\end{enumerate}
\end{prop}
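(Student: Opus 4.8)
The plan is to obtain an explicit description of $\widetilde{R}$ as a direct limit and then read off both statements from it. First I would compute the left submodule $\im(\mathrm{R}_{1-t})=R[\sigma](1-t)$. Writing $y=\sum_i t^i y_i$ and using the commutation relation $y_i t=t\sigma(y_i)$ built into $R[\sigma]$, one gets
\[
y(1-t)=y-yt=y_0+\sum_{k\ge 1}t^k\bigl(y_k-\sigma(y_{k-1})\bigr),
\]
so $R[\sigma](1-t)$ consists exactly of the telescoping combinations of this shape. The resulting relation $\overline{t^i r}=\overline{t^{i+1}\sigma(r)}$ in $\widetilde{R}$ lets me push every class up to a common degree, so each element is represented as $\overline{t^N s}$ for some $N\ge 0$ and $s\in R$; and solving $t^N u=y(1-t)$ for a (necessarily finite) $y$ shows that $\overline{t^N u}=0$ if and only if $\sigma^j(u)=0$ for some $j$. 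I expect this bookkeeping to be the main obstacle, since it requires tracking the noncommutative relation together with the finiteness of $y$; everything afterwards is formal.

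These two facts identify $\widetilde{R}$, as an abelian group, with the direct limit $\widehat{R}:=\varinjlim\bigl(R\xrightarrow{\sigma}R\xrightarrow{\sigma}\cdots\bigr)$ via $\overline{t^N s}\mapsto[s]_N$, where $[s]_N$ denotes the class of $s$ in the $N$-th term. I would then transport the remaining structure along this isomorphism. The computation $r\cdot\overline{t^N s}=\overline{t^N\sigma^N(r)s}$ shows that the left $R$-action becomes $r\cdot[s]_N=[\sigma^N(r)s]_N$, while $\sigma_{\widetilde{R}}(\overline{t^N s})=\overline{t^{N+1}s}$ (recall from Proposition \ref{leftright} that $\sigma_{\widetilde{R}}$ is left multiplication by $t$) becomes the shift $\phi\colon[s]_N\mapsto[s]_{N+1}$.

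For part (1) it then remains to check that $\phi$ is bijective on $\widehat{R}$, and this is immediate from the description of the direct limit, with no hypothesis on $\sigma$. For injectivity, $\phi([s]_N)=0$ forces $\sigma^j(s)=0$ for some $j$, whence already $[s]_N=0$. For surjectivity, every $[s]_M$ is hit: by $\phi([s]_{M-1})$ when $M\ge 1$, and by $\phi([\sigma(s)]_0)=[\sigma(s)]_1=[s]_0$ when $M=0$, the last equality being the transition relation. Hence $\sigma_{\widetilde{R}}$ is invertible.

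For part (2), assume $\sigma$ is an automorphism. Then each transition map in $\widehat{R}$ is an isomorphism, so the canonical maps from the terms are isomorphisms and $[s]_N\mapsto\sigma^{-N}(s)$ defines an isomorphism $\widehat{R}\xrightarrow{\sim}R$. I would first record that $(R,\sigma^{-1})$ genuinely is a left difference $(R,\sigma)$-module: condition $(\dagger)$ here reads $\sigma^{-1}(\sigma(r)m)=r\,\sigma^{-1}(m)$, which holds because $\sigma^{-1}$ is a ring homomorphism. Finally, composing with the identification of part (1) and checking the compatibilities exhibits the desired isomorphism of difference modules: the map is $R$-linear since $r\cdot\overline{t^N s}\mapsto r\,\sigma^{-N}(s)$, and it intertwines $\sigma_{\widetilde{R}}$ with $\sigma^{-1}$ since $\overline{t^{N+1}s}\mapsto\sigma^{-N-1}(s)=\sigma^{-1}\bigl(\sigma^{-N}(s)\bigr)$. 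This yields $\bigl(\widetilde{R},\sigma_{\widetilde{R}}\bigr)\simeq\bigl(R,\sigma^{-1}\bigr)$.
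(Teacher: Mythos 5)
Your proof is correct, but it takes a genuinely different route from the paper's. The paper argues directly on the quotient: for part (1) it observes that the relation $\sum t^i r_i=\sum t^{i+1}\sigma(r_i)$ holds in $\widetilde{R}$ and simply writes down the inverse of $\sigma_{\widetilde{R}}$, namely the coefficientwise map $\sum t^i r_i\mapsto\sum t^i\sigma(r_i)$; for part (2) it exhibits two explicit $R[\sigma]$--homomorphisms, $\alpha(r):=\overline{r}$ and $\beta\bigl(\overline{\sum t^i r_i}\bigr):=\sum\sigma^{-i}(r_i)$, and checks that they are mutually inverse. You instead first establish a structural description, $\widetilde{R}\simeq\varinjlim\bigl(R\xrightarrow{\sigma}R\xrightarrow{\sigma}\cdots\bigr)$ with the twisted action $r\cdot[s]_N=[\sigma^N(r)s]_N$ and with $\sigma_{\widetilde{R}}$ the shift, and then both parts become formal: the shift on a telescope is bijective, and a colimit along isomorphisms collapses onto its first term. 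This costs you the bookkeeping you correctly flagged as the main obstacle --- computing $\im(\mathrm{R}_{1-t})$ and the kernel criterion that $\overline{t^Nu}=0$ if and only if $\sigma^j(u)=0$ for some $j$ (your verification, including the essential use of finiteness of $y$, is sound) --- which the paper's proof avoids entirely by never determining the image, only using the relations it imposes. What your route buys is a description of $\widetilde{R}$ valid for an arbitrary endomorphism $\sigma$, making precise the idea that $\widetilde{R}$ is ``$R$ with $\sigma$ formally inverted'', a point the paper itself only alludes to informally in Section \ref{ccfs}; it also explains part (2) conceptually rather than by verification. Note finally that your explicit maps coincide with the paper's: your $\overline{t^Ns}\mapsto\sigma^{-N}(s)$ is exactly $\beta$, and your inverse shift agrees with the paper's coefficientwise $\sigma$ via the defining relation.
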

\begin{proof}
Since we have the following relation in $\widetilde{R}$:
$$\sum_{i=0}^n t^i r_i=\sum_{i=0}^n t^{i+1}\sigma(r_i),$$
we see that the map $\sum t^i r_i\mapsto \sum t^i \sigma(r_i)$ is the inverse
of $\sigma_{\widetilde{R}}$.

For the second part, we observe first that the map
\[\alpha: (R,\sigma^{-1})\ra \widetilde{R},\]
given by the formula $\alpha(r):=r$, is a homomorphism of left $R[\sigma]$--modules,
since the relation $\sigma^{-1}(r)=tr$ holds in $\widetilde{R}$. Also, the map
\[\beta:\widetilde{R}\ra (R,\sigma^{-1})\]
given by
\[\beta\left(\sum t^i r_i\right):=\sum \sigma^{-i}(r_i)\]
is  a homomorphism of left $R[\sigma]$--modules. We see now that $\alpha$
and $\beta$ are mutually inverse. \end{proof}

From now on,  we focus exclusively on left (difference) modules, hence
we denote by $\modd_{R}^{\sigma}$ the category of left difference $(R,\sigma)$--modules
(or the equivalent  category of left  $R[\sigma]$--modules).
Also, if it causes no confusion we will not refer to endomorphisms in our notation,
i.e. we will usually say ``$M$ is a left difference $R$--module'' (or even ``$M$ is a difference $R$--module'') instead of saying
``$(M,\sigma_M)$ is a left difference $(R,\sigma)$--module''.

We finish this section with an elementary homological computation, which explains (roughly speaking)
the effect of adding a difference structure on homology. We will make this point more precise in the next section.

For a difference   $R$--module $M$, let $M^{\sigma_M}$ (resp. $M_{\sigma_M}$) stand for the Abelian group of invariants (resp. coinvariants) of the action of $\sigma_M$. Explicitly, we have:
\[M^{\sigma_M}=\{ m\in M\ |\ \sigma_M(m)=m\}, \]
and
\[M_{\sigma_M}=M/\langle\sigma_M(m)-m\ |\ m\in M\rangle.\]
Then we have the following.
\begin{prop}\label{gencoh}
For a difference $R$--module $M$, we have:
\[\mbox{\rm{Hom}}_{\modd_{R}^{\sigma}}(\widetilde{R},M)=M^{\sigma_M},\]
$$\ext^1_{{\scriptsize \modd}_{R}^{\sigma}}(\widetilde{R},M)=M_{\sigma_M},$$
$$\ext^{>1}_{\scriptsize{\modd}_{R}^{\sigma}}(\widetilde{R},M)=0.$$
\end{prop}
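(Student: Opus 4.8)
The plan is to build an explicit free resolution of $\widetilde{R}$ of length one and then apply $\Hom_{\modd_R^\sigma}(-,M)$. By Definition \ref{defrtil}, $\widetilde{R}=\coker(\mathrm{R}_{1-t})$ fits into the right exact sequence
\[R[\sigma]\xrightarrow{\ \mathrm{R}_{1-t}\ }R[\sigma]\ra\widetilde{R}\ra 0,\]
in which $\mathrm{R}_{1-t}$ is a morphism of left $R[\sigma]$--modules. Since $R[\sigma]$ is free (hence projective) as a left module over itself, the only thing needed to promote this to a projective resolution is the injectivity of $\mathrm{R}_{1-t}$, and I expect this to be the main obstacle. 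I would prove it by a coefficient computation: if $x=\sum_{i=0}^n t^ir_i$ satisfies $x(1-t)=0$, i.e. $x=xt$, then using the commutation rule $r_it=t\sigma(r_i)$ one gets $xt=\sum_{i}t^{i+1}\sigma(r_i)$. Comparing the $t^0$--coefficients forces $r_0=0$, and comparing the coefficients of $t^j$ for $j\ge 1$ gives $r_j=\sigma(r_{j-1})$, so inductively $r_j=0$ for all $j$ and $x=0$. It is worth emphasizing that this argument goes through even when $\sigma$ is not injective, because the recursion is driven by the vanishing of the constant term rather than by cancellation of a leading term.

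Granting injectivity, I obtain the free resolution
\[0\ra R[\sigma]\xrightarrow{\ \mathrm{R}_{1-t}\ }R[\sigma]\ra\widetilde{R}\ra 0\]
of length one. Applying $\Hom_{\modd_R^\sigma}(-,M)$ and using the canonical identification $\Hom_{\modd_R^\sigma}(R[\sigma],M)\cong M$, $f\mapsto f(1)$, the complex computing the Ext groups becomes a two--term complex concentrated in degrees $0$ and $1$. The remaining point is to identify the dualized differential $(\mathrm{R}_{1-t})^{*}$ under this identification, which requires the same care with the commutation relation $rt=t\sigma(r)$. For $f$ corresponding to $m=f(1)$, left $R[\sigma]$--linearity gives $f(t)=t\cdot f(1)=\sigma_M(m)$, whence $(\mathrm{R}_{1-t})^{*}(f)(1)=f(1-t)=m-\sigma_M(m)$. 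Thus the complex reads
\[0\ra M\xrightarrow{\ \id-\sigma_M\ }M\ra 0.\]

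Finally I would read off the cohomology. In degree zero it is $\ker(\id-\sigma_M)=\{m\in M\mid\sigma_M(m)=m\}=M^{\sigma_M}$, which yields the first formula. In degree one it is $\coker(\id-\sigma_M)=M/\langle m-\sigma_M(m)\rangle=M_{\sigma_M}$, which yields the second. Since the resolution has length one, all higher Ext groups vanish, giving the third. The genuine content lies in the injectivity of $\mathrm{R}_{1-t}$ and in the correct bookkeeping of the twisted multiplication when identifying the dualized map; the rest is formal.
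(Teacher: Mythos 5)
Your proof is correct and takes essentially the same route as the paper's: the length--one free resolution $0\ra R[\sigma]\xrightarrow{\mathrm{R}_{1-t}}R[\sigma]\ra\widetilde{R}\ra 0$, dualized and identified with the two--term complex $0\ra M\xrightarrow{\id-\sigma_M}M\ra 0$ (which the paper writes as left multiplication $\mathrm{L}_{1-t}$), whose kernel, cokernel, and vanishing higher cohomology give the three formulas. The only difference is that you spell out the coefficient argument for the injectivity of $\mathrm{R}_{1-t}$, which the paper asserts without proof; your argument is correct, including the observation that it requires no injectivity of $\sigma$.
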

\begin{proof} Since the map $\mathrm{R}_{1-t}$ is injective,
the complex
\begin{equation*}
 \xymatrix{  0 \ar[r]^{} &  R[\sigma] \ar[r]^{\mathrm{R}_{1-t}}   & R[\sigma] \ar[r]^{}  & 0 }
\end{equation*}
%\[0\ra R[\sigma]\stackrel{\mathrm{R}_{1-t}}{\longrightarrow} R[\sigma]\ra 0\]
is a free resolution of $\widetilde{R}$.
Then the complex of  Abelian groups
\begin{equation*}
 \xymatrix{  0 \ar[r]^{} &  \homm_{\scriptsize{\modd}_{R}^{\sigma}}(R[\sigma],M) \ar[r]^{(\mathrm{R}_{1-t})^*} & \homm_{\scriptsize{\modd}_{R}^{\sigma}}(R[\sigma],M) \ar[r]^{}  & 0, }
\end{equation*}
%\[0\ra \homm_{\scriptsize{\modd}_{R}^{\sigma}}(R[\sigma],M)\stackrel{(\mathrm{R}_{1-t})^*}{\ra}
%\homm_{\scriptsize{\modd}_{R}^{\sigma}}(R[\sigma],M)\ra 0\]
which computes our Ext--groups, may be identified with the complex
\begin{equation*}
 \xymatrix{  0 \ar[r]^{} &  M \ar[r]^{\mathrm{L}_{1-t}} & M \ar[r]^{}  & 0,}
\end{equation*}
%\[0\ra M\stackrel{\mathrm{L}_{1-t}}{\ra} M\ra 0,\]
where $\mathrm{L}_{1-t}$ stands for the left multiplication by the element $(1-t)$.
Thus, the proposition follows. \end{proof}

%\begin{remark}
%Analogous results can be obtained for $R[t^{\sigma}]$, which I did not check... Since we like the condition $(\dagger)$ and we like (hopefully...) \emph{left} %modules, we will use the ring $R[\sigma]$. If $R$ is commutative, then $R[t^{\sigma}]=R[\sigma]^{\op}$.
%\end{remark}
%\begin{definition}
%The forgetful functor
%$$\Mod_{R[\sigma]}\to \Mod_R$$
%has a left-adjoint functor, which we will denote by $M^{\infty}$. It is defined as
%$$\Mod_R\to \Mod_{R[\sigma]},\ \ \ M\mapsto %M^{\infty}:=R[\sigma]\otimes_RM.$$
%\end{definition}
%\begin{remark}\label{telgen}
%It is clear that we have the following isomorphism of left $R$-modules
%$$M^{\infty}\simeq \bigoplus_{i=0}^{\infty}(\sigma^i)^*(M).$$
%After identifying $M^{\infty}$ with the right-hand side above, it is easy to see %that the left $R[\sigma]$-module structure on $M^{\infty}$ is given by the %following ``right-shift'' map:
%$$\sigma_{\infty}(m_0,m_1,m_2,\ldots)=(0,m_0,m_1,m_2,\ldots).$$
%\end{remark}

\section{Difference representations and  cohomology}\label{discat}
Let $(A,\sigma_A)$ be a difference commutative ring and $G$ be a group with an endomorphism $\sigma_G$.
In this section, we apply the results of Section 2 to the  ring $R:=A[G]$,  the group
ring of  $G$ with coefficients in $A$. The ring $R$ with the map
\[\sigma\left(\sum a_i g_i\right):=\sum\sigma_A(a_i)\sigma_G(g_i)\]
is clearly a difference ring. We will often say ``difference representation
of $G$ (over $A$)'' for ``difference $A[G]$--module''.
We observe now that the augmentation map
$\epsilon : A[G]\ra A$ is a homomorphism of difference rings (by this we mean a ring homomorphism commuting with $\sigma$ and $\sigma_A$). Hence, we can endow the left difference $A$--module $\widetilde{A}$ (see Definition \ref{defrtil}) with the ``trivial'' structure of a left difference $A[G]$--module, i.e. we put
\[\left(\sum a_i g_i\right)\cdot a:=\sum a_i\cdot a.\]
\begin{remark}\label{lambdag}
We would like to warn the reader that in contrast to the classical representation theory,
difference representations $(M,\sigma_M)$ correspond  to homomorphisms into the group $\gl_A(M)$
only if $\sigma_M$ is an automorphism. More precisely, if $(M,\sigma_M)$ is a difference $A$--module and $\sigma_M$ is an automorphism, then we have the automorphism  $\widetilde{\sigma_M}$ on $\gl_A(M)$ given by the conjugation:
$$\widetilde{\sigma_M}(\alpha):= \sigma_M^{-1}\circ\alpha\circ \sigma_M.$$
It is easy to see then that  endowing $(M,\sigma_M)$ with the structure of a difference $A[G]$-module is the same as constructing a homomorphism of difference groups
$$\Phi : (G,\sigma_G) \to (\gl_A(M),\widetilde{\sigma_M}).$$
\end{remark}
We are ready now to define the notion of a difference group cohomology.
\begin{definition}
Let $M$ be a  difference $A[G]$--module. We define:
\[H^j_{\sigma}(G,M):=\mbox{Ext}^j_{\scriptsize{\modd}_{R}^{\sigma}}(\widetilde{A},M).\]
\end{definition}
We show below that the 0--th difference cohomology can be described in terms
of invariants.
\begin{prop}\label{disfak}
For any  difference $A[G]$--module $M$, we have:
\[H^0_{\sigma}(G,M)=M^G\cap M^{\sigma_M}.\]
\end{prop}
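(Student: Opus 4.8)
The plan is to compute $H^0_\sigma(G,M) = \Hom_{\scriptsize{\modd}_R^\sigma}(\widetilde{A},M)$ directly using Proposition \ref{gencoh}, but now taking into account the group structure. Since $\widetilde{A}$ here is $\widetilde{R}$ for $R = A[G]$ equipped with the trivial $G$-action coming from the augmentation $\epsilon$, Proposition \ref{gencoh} already tells us that $\Hom_{\scriptsize{\modd}_A^\sigma}(\widetilde{A},M) = M^{\sigma_M}$ when we forget the $G$-structure. The point of the present statement is to identify which of these $\sigma_M$-invariant elements are also $G$-equivariant.

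First I would use the free resolution from the proof of Proposition \ref{gencoh}, namely
\begin{equation*}
\xymatrix{ 0 \ar[r] & R[\sigma] \ar[r]^{\mathrm{R}_{1-t}} & R[\sigma] \ar[r] & \widetilde{A} \ar[r] & 0 },
\end{equation*}
but with the caveat that this must be the resolution of $\widetilde{A}$ as a difference $A[G]$-module. I would need to check that over $R = A[G]$ the module $\widetilde{R}$ (built from the free $R[\sigma]$-module on one generator by cokernel of $\mathrm{R}_{1-t}$) is exactly the difference $A[G]$-module $\widetilde{A}$ with its trivial structure; this follows because $\epsilon$ is a homomorphism of difference rings and the construction of $\widetilde{\phantom{R}}$ is functorial. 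Then applying $\Hom_{\scriptsize{\modd}_R^\sigma}(-,M)$ reduces the $\Hom$-group to the kernel of $\mathrm{L}_{1-t}$ acting on $\Hom_{\scriptsize{\modd}_R^\sigma}(R[\sigma],M)$, which is the kernel of $1 - \sigma_M$ on the underlying $G$-equivariant maps. Concretely, a difference $A[G]$-homomorphism $\widetilde{A} \to M$ is determined by the image $m$ of the generator, and the two conditions that $m$ must satisfy are (i) $\sigma_M(m) = m$, coming from the relation $\mathrm{R}_{1-t}$, and (ii) $G$-invariance, i.e. $g \cdot m = m$ for all $g \in G$, coming from the requirement that the map respect the $A[G]$-module structure where $\widetilde{A}$ carries the trivial action.

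The main step is therefore to verify carefully that a morphism in $\modd_R^\sigma$ out of $\widetilde{A}$ encodes precisely these two independent conditions. Since the generator of $\widetilde{A}$ is fixed by $G$ (trivial action) and fixed by $\sigma_{\widetilde{A}}$ (as $\sigma_{\widetilde{A}}$ is invertible and sends the generator to itself, by Proposition \ref{rtilda}(1)), any difference $A[G]$-homomorphism must send it to an element $m$ with $g\cdot m = m$ and $\sigma_M(m)=m$. Conversely, any such $m$ extends uniquely to a well-defined morphism, since all relations in $\widetilde{A}$ are then respected. This yields the identification of $\Hom_{\scriptsize{\modd}_R^\sigma}(\widetilde{A},M)$ with $\{m \in M \mid g\cdot m = m \ \forall g \in G \text{ and } \sigma_M(m)=m\} = M^G \cap M^{\sigma_M}$.

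The step I expect to require the most care is disentangling the $G$-invariance condition from the $\sigma_M$-invariance condition and confirming they are genuinely independent and simultaneously imposed, rather than entangled through the twisted multiplication in $R[\sigma] = A[G][\sigma]$. The noncommutativity between $\sigma$ and the group elements (encoded in $\sigma(\sum a_i g_i) = \sum \sigma_A(a_i)\sigma_G(g_i)$) means one should double-check that the evaluation of a morphism on $t\cdot(g - 1)$ versus $(g-1)\cdot t$ does not introduce a cross-term obstruction. Once it is confirmed that both $(\dagger)$ and $G$-equivariance hold cleanly on the single generator, the conjunction $M^G \cap M^{\sigma_M}$ follows immediately, so the bulk of the argument is this compatibility check rather than any substantial computation.
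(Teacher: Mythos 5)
There is a genuine gap, and it sits at the foundation of your argument. The claim that $\widetilde{A}$ with its trivial $G$-action ``is $\widetilde{R}$ for $R=A[G]$'', i.e.\ that
$0\to R[\sigma]\stackrel{\mathrm{R}_{1-t}}{\longrightarrow} R[\sigma]\to\widetilde{A}\to 0$
is a free resolution of the \emph{trivial} difference module $\widetilde{A}$ over $A[G][\sigma]$, is false, and functoriality of the tilde construction does not repair it. The cokernel of $\mathrm{R}_{1-t}$ on $A[G][\sigma]$ is $\widetilde{A[G]}$, a ``regular'' rather than a trivial object: take $A=\ka$ a field with $\sigma_A=\id$ and $G$ nontrivial with $\sigma_G=\id$; then by Proposition \ref{rtilda}(2) one has $\widetilde{\ka[G]}\simeq(\ka[G],\id)$, the left regular representation, whereas $\widetilde{\ka}\simeq\ka$ carries the trivial $G$-action, and these are not isomorphic. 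The augmentation $\epsilon$ only induces a surjection $\widetilde{A[G]}\to\widetilde{A}$, whose kernel is generated by the augmentation ideal. Worse, if your identification were true, then Proposition \ref{gencoh} applied to the difference ring $A[G]$ would give $H^0_{\sigma}(G,M)=\Hom_{\scriptsize{\modd}_{A[G]}^{\sigma}}(\widetilde{A[G]},M)=M^{\sigma_M}$, with no $G$-invariance condition at all --- contradicting the statement you are proving. The inconsistency also shows in your own text: you insert condition (ii) ($G$-invariance) by hand, but it cannot arise from a presentation with the single relation $\mathrm{R}_{1-t}$, since morphisms out of the free module $R[\sigma]$ are simply elements of $M$ (evaluation at $1$), not ``$G$-equivariant maps''.

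Your closing generator argument can be salvaged, but it requires the correct presentation: as a left $A[G][\sigma]$-module, $\widetilde{A}$ is cyclic on the class $\bar{1}$ of $1$ subject to the relations $(1-t)\cdot\bar{1}=0$ \emph{and} $(g-1)\cdot\bar{1}=0$ for all $g\in G$; that is, $\widetilde{A}\simeq A[G][\sigma]/\mathcal{I}$, where $\mathcal{I}$ is the left ideal generated by $1-t$ and all $g-1$. Granting this, $\Hom_{\scriptsize{\modd}_{A[G]}^{\sigma}}(\widetilde{A},M)$ is identified with $\{m\in M \mid \sigma_M(m)=m \mbox{ and } g\cdot m=m \mbox{ for all } g\in G\}=M^G\cap M^{\sigma_M}$, as desired; but your ``conversely, any such $m$ extends uniquely'' is precisely the verification of this presentation, which you have not supplied. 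The paper avoids the issue entirely by factoring through invariants: it first observes, using $(\dagger)$, that $M^G$ is preserved by $\sigma_M$, then uses triviality of the $G$-action on $\widetilde{A}$ to identify $\Hom_{\scriptsize{\modd}_{A[G]}^{\sigma}}(\widetilde{A},M)=\Hom_{\scriptsize{\modd}_{A}^{\sigma}}(\widetilde{A},M^G)$, and finally applies Proposition \ref{gencoh} over the difference ring $A$ --- not over $A[G]$ --- to conclude $(M^G)^{\sigma_M}=M^G\cap M^{\sigma_M}$.
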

\begin{proof}
		We observe first  that by the $(\dag$)--condition from Section \ref{secaem}, the $A$--module $M^{G}$ is preserved by $\sigma_M$. Indeed, 		
		for any $m\in M^G$ we have:
		\[g\cdot (\sigma_M(m))=\sigma_M(\sigma_G(g)\cdot m)=\sigma_M(m).\]
		Thus $M^{G}$ is a difference $A$--module and,
		since $G$ acts on $\widetilde{A}$ trivially, we have
	\[\homm_{\scriptsize{\modd}_{A[G]}^{\sigma}}(\widetilde{A},M)=
	\homm_{\scriptsize{\modd}_A^{\sigma}}(\widetilde{A},M^G).\]
By Proposition \ref{gencoh}, we obtain:
	\[\homm_{\scriptsize{\modd}_{A}^{\sigma}}(\widetilde{A},M^G)=(M^G)^{\sigma_M}=
	M^G\cap M^{\sigma_M},\]
	which completes the proof. \end{proof}
		
This description shows possibility of factoring the difference cohomology functor
as the composite of two left exact functors. 	To make this precise, let us consider the chain
of left exact functors:
\[\modd_{A[G]}^{\sigma}\stackrel{K}{\ra} \modd_{A}^{\sigma}
		\stackrel{L}{\ra}\modd_{A},\]
	where
	\[K(M):=\homm_{\scriptsize{\modd}_{A[G]}}(A,M)=M^G\]
	and
	\[L(N):=\homm_{\scriptsize{\modd}_A^{\sigma}}(\widetilde{A},N)=N^{\sigma_N}.\]
	We recall here the fact observed in the proof of Proposition \ref{disfak} that the target category of $K$ is indeed the category
$\modd_{A}^{\sigma}$. Now, Proposition \ref{disfak} can be understood as the following factorization
\[H^0_{\sigma}(G,-)=L\circ K.\]
We would like now to associate  the Grothendieck spectral sequence to the above factorization. To achieve this, we need the following fact.
\begin{lemma}\label{injpr}
	The functor $\epsilon^*:\modd_{A}^{\sigma}{\ra}
	\modd^{\sigma}_{A[G]}$
is left adjoint to $K$. Consequently, the functor $K$ preserves injectives.
\end{lemma}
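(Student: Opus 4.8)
The plan is to establish the adjunction by reducing to the classical isomorphism between the trivial-representation functor and the $G$-invariants functor, and then checking that this isomorphism respects the difference structures; the preservation of injectives will follow from a standard categorical argument once one knows that $\epsilon^*$ is exact. First I would record that $\epsilon^*N$ really carries a difference $A[G]$-module structure with the same additive endomorphism $\sigma_N$ as $N$. This is an instance of the general principle that restriction of scalars along a homomorphism of difference rings is well defined: since $\epsilon\circ\sigma=\sigma_A\circ\epsilon$, the condition $(\dagger)$ for $\epsilon^*N$ reduces to the condition $(\dagger)$ for $N$, and in particular $\sigma_{\epsilon^*N}=\sigma_N$.

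Next I would set up the underlying $A$-linear adjunction, forgetting the difference structure. Given an $A[G]$-homomorphism $f\colon\epsilon^*N\ra M$, the triviality of the $G$-action on $\epsilon^*N$ forces $g\cdot f(n)=f(g\cdot n)=f(n)$, so $f$ takes values in $M^G$ and corestricts to an $A$-homomorphism $\bar f\colon N\ra M^G$; conversely any $A$-homomorphism $h\colon N\ra M^G$ extends to an $A[G]$-homomorphism $\epsilon^*N\ra M$, because each $r\in A[G]$ acts on $h(n)\in M^G$ through the augmentation $\epsilon(r)$, matching its action on $\epsilon^*N$. These assignments are mutually inverse and natural in both variables, yielding the classical isomorphism $\Hom_{A[G]}(\epsilon^*N,M)\cong\Hom_A(N,M^G)$.

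The only genuinely difference-theoretic point is that this bijection matches up the $\sigma$-equivariant morphisms on the two sides. Recall from the proof of Proposition \ref{disfak} that $M^G$ is $\sigma_M$-stable, so that $\sigma_{M^G}$ is simply the restriction $\sigma_M|_{M^G}$; combined with $\sigma_{\epsilon^*N}=\sigma_N$, this shows that the equality $f\circ\sigma_N=\sigma_M\circ f$ (expressing that $f$ is a morphism in $\modd_{A[G]}^\sigma$) holds as maps into $M$ if and only if $\bar f\circ\sigma_N=\sigma_{M^G}\circ\bar f$ holds as maps into $M^G$ (expressing that $\bar f$ is a morphism in $\modd_A^\sigma$). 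Hence the classical isomorphism restricts to a natural isomorphism
\[\Hom_{\modd_{A[G]}^\sigma}(\epsilon^*N,M)\cong\Hom_{\modd_A^\sigma}(N,K(M)),\]
which is precisely the asserted adjunction.

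For the final clause I would invoke the standard fact that a right adjoint whose left adjoint is exact preserves injective objects. Here $\epsilon^*$ is exact, since it leaves both the underlying abelian groups and the maps $\sigma_N$ unchanged, so a sequence in $\modd_A^\sigma$ is exact precisely when its image under $\epsilon^*$ is. Thus, for an injective $I$ in $\modd_{A[G]}^\sigma$, the functor $\Hom_{\modd_A^\sigma}(-,K(I))\cong\Hom_{\modd_{A[G]}^\sigma}(\epsilon^*(-),I)$ is exact, so $K(I)$ is injective. I do not anticipate a real obstacle: the whole content is the compatibility of the classical adjunction with the difference structures, and this is immediate once one observes that $\sigma_{M^G}$ is merely the restriction of $\sigma_M$.
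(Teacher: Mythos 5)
Your proof is correct and follows essentially the same route as the paper: the adjunction comes from the classical isomorphism $\Hom_{A[G]}(\epsilon^*N,M)\cong\Hom_A(N,M^G)$ forced by the trivial $G$-action on $\epsilon^*N$, upgraded to the difference setting via the $\sigma_M$-stability of $M^G$, with injectives preserved because the left adjoint $\epsilon^*$ is exact. The paper compresses all of this into two sentences; you have merely supplied the details it leaves implicit.
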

\begin{proof} The desired adjunction is a natural isomorphism
\[\homm_{\scriptsize{\modd}_{A[G]}^{\sigma}}(\epsilon^*(N),M)\simeq
	\homm_{\scriptsize{\modd}_A^{\sigma}}\left(N,M^G\right),\]
which immediately follows from the fact that $G$ acts trivially on $\epsilon^*(N)$.
Thus $K$ has an exact left adjoint functor, hence it preserves injectives. \end{proof}

The description of the functor $K$ above also shows that for any difference $A[G]$--module $M$, each $H^j(G,M)$ has a natural structure of a difference $A$-module. The endomorphism  of $H^j(G,M)$ can be explicitly described as
 the composite of the following two arrows:
 \begin{equation*}
 	\xymatrix{  H^j(G,M) \ar[r]^{\sigma_G^*\ \ }   & H^j\left(G,M^{(1)}\right) \ar[r]^{(\sigma_M)_*}  & H^j(G,M), }
 \end{equation*}
 where the first one is the restriction map along $\sigma_G$
 (\cite[Chapter 6.8]{WeiHA}), and the second one is the map induced by the $G$--invariant map $\sigma_M:M^{(1)}\to M$.

Then we have the following result, where the invariants and the coinvariants are taken with respect to the difference structure which was just described.
\begin{theorem}\label{spectral}
For any  difference $A[G]$--module $M$ and $j\geqslant  0$, there is a short exact sequence (setting $H^{-1}(G,M):=0$):
\[0\ra H^{j-1}(G,M)_{\sigma}\ra H^j_{\sigma}(G,M)\ra H^j(G,M)^{\sigma}\ra 0.\]
\end{theorem}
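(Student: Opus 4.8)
The plan is to exploit the factorization $H^0_\sigma(G,-)=L\circ K$ established in Proposition \ref{disfak}, together with Lemma \ref{injpr}, to build a Grothendieck spectral sequence and then argue that it degenerates sufficiently to yield the stated short exact sequence. First I would verify the hypotheses of the Grothendieck spectral sequence theorem for the composite $L\circ K$: the functor $K$ sends injectives to injectives (this is exactly Lemma \ref{injpr}), and both $K$ and $L$ are left exact. This gives a first-quadrant spectral sequence
\[
E_2^{p,q}=(R^pL)\left((R^qK)(M)\right)\Longrightarrow (R^{p+q}(L\circ K))(M)=H^{p+q}_\sigma(G,M).
\]
Here $(R^qK)(M)=H^q(G,M)$ is the ordinary rational/discrete cohomology equipped with its difference $A$-module structure (the endomorphism described just before the theorem via $\sigma_G^*$ followed by $(\sigma_M)_*$), and $R^pL$ is the derived functor of the $\sigma$-invariants functor on $\modd_A^\sigma$.

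The key computational input is Proposition \ref{gencoh}, which tells us that $L=\Hom_{\modd_A^\sigma}(\widetilde{A},-)$ has derived functors concentrated in degrees $0$ and $1$: namely $R^0L(N)=N^{\sigma_N}$, $R^1L(N)=N_{\sigma_N}$, and $R^pL(N)=0$ for $p>1$. Thus the spectral sequence has only two nonzero rows, $p=0$ and $p=1$:
\[
E_2^{0,q}=H^q(G,M)^{\sigma},\qquad E_2^{1,q}=H^q(G,M)_{\sigma},
\]
with all other columns vanishing. A spectral sequence supported on two adjacent rows collapses to a long exact sequence, or equivalently breaks into short exact sequences, once one checks that the differentials behave appropriately.

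Concretely, I would argue as follows. Because only the rows $p=0,1$ survive, the only possibly nonzero differential is $d_2\colon E_2^{0,q}\to E_2^{2,q-1}$, which lands in a zero group, so $d_2=0$ and the sequence degenerates at $E_2$: we have $E_\infty^{p,q}=E_2^{p,q}$. The abutment $H^n_\sigma(G,M)$ then carries a two-step filtration whose associated graded pieces are $E_\infty^{0,n}=H^n(G,M)^\sigma$ and $E_\infty^{1,n-1}=H^{n-1}(G,M)_\sigma$. Writing out this filtration for $n=j$ yields precisely the short exact sequence
\[
0\ra H^{j-1}(G,M)_{\sigma}\ra H^j_{\sigma}(G,M)\ra H^j(G,M)^{\sigma}\ra 0,
\]
with the convention $H^{-1}(G,M):=0$ handling the edge case $j=0$ (where the sequence reduces to the identification $H^0_\sigma(G,M)=H^0(G,M)^\sigma$ of Proposition \ref{disfak}).

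The main obstacle I anticipate is not the degeneration argument, which is formal once the two-row structure is in place, but rather the careful identification of the difference $A$-module structure on $(R^qK)(M)=H^q(G,M)$ with the endomorphism described before the theorem statement; one must confirm that the $\sigma_{H^q}$ produced abstractly by the Grothendieck construction (via the $R[\sigma]$-module structure coming from $K$ landing in $\modd_A^\sigma$) agrees with the concrete composite $(\sigma_M)_*\circ\sigma_G^*$, so that the invariants and coinvariants appearing in the $E_2$-page are computed with respect to the intended structure. I would verify this by tracing the functoriality through the adjunction of Lemma \ref{injpr} and the explicit description of $\sigma$ on group cohomology via restriction along $\sigma_G$. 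A secondary, purely bookkeeping point is to confirm the edge maps of the spectral sequence realize the inclusion and projection in the short exact sequence as genuinely natural transformations, which follows from the naturality of the Grothendieck spectral sequence.
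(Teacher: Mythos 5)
Your proposal is correct and follows exactly the paper's own argument: the Grothendieck spectral sequence for the composite $L\circ K$ (justified by Lemma \ref{injpr} and left exactness), with Proposition \ref{gencoh} forcing the $E_2$-page to be concentrated in the two columns $p=0,1$, whence degeneration and the two-step filtration give the short exact sequence. Your extra care about matching the abstract difference structure on $H^q(G,M)$ with the composite $(\sigma_M)_*\circ\sigma_G^*$, and about the edge case $j=0$, only makes explicit what the paper leaves implicit.
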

\begin{proof}
Since $L,K$ are left exact functors and $K$ takes injective objects to $L$--acyclic ones by Lemma \ref{injpr}, we can construct the  Grothendieck spectral sequence (see e.g. \cite[Chapter 5.8]{WeiHA}) associated to the composite functor $L\circ K$. This spectral sequence converges
to $H^{p+q}_{\sigma}(G,M)$, and its second page has the following form:
\[ E^{pq}_2=\mbox{Ext}^p_{\scriptsize{\modd}_A^{\sigma}}(\widetilde{A}, H^q(G,M)).\]
By Proposition \ref{gencoh}, there are only two nontrivial columns in this page where we have:
\[E_2^{0j}=H^j(G,M)^{\sigma}\ \ \ \mbox{and}\ \ \ E_2^{1j}=H^j(G,M)_{\sigma}.\]
Thus all the differentials vanish and we get the result.
\end{proof}
The above theorem is an efficient tool for computations  of difference cohomology groups. Let us look at some simple examples.
\begin{example}
Let $G=\Zz/p$ be the cyclic group of prime order $p>2$ with an automorphism $\sigma_G $ given
by the formula $\sigma_G(a):=ta$ for some integer $t$ such that $0<t<p$. Let $r$ be the order of $t$ in
the multiplicative group of the field $\Ff_p$ and let further
$A=\ka$ be a field of characteristic $p$.
\begin{enumerate}
\item Let us take $\sigma_A=\id$.  We would like to compute
$$H^*_{\sigma}(\Zz/p,\ka):=\bigoplus_{n=0}^{\infty}H^n_{\sigma}(\Zz/p,\ka)$$
for $(\ka,\id)$ regarded as the trivial difference $\ka[G]$--module. In order to apply Theorem \ref{spectral}, we need to explicitly
describe the endomorphism of $H^*(\Zz/p,\ka)$, let us call it $\sigma_{H^*}$, which comes from the difference structure. When $M$ is a trivial $G$--module,
we have $H^1(G,M)=\mbox{Hom}_{\mathrm{Ab}}(G,M)$ and we obtain:
$$\sigma_{H^1}(\phi)=\sigma_M\circ\phi\circ\sigma_G.$$
Coming back to our example, let us fix a non-zero $y\in H^1(\Zz/p,\ka)$ and let $x\in H^2(\Zz/p,\ka)$ be the image of $y$ by the Bockstein homomorphism.
It is well known (see e.g. \cite[Exercise 6.7.5]{WeiHA}) that we have a ring isomorphism:
\[H^*(\Zz/p,\ka)=S(\ka x)\otimes \Lambda(\ka y),\]
where $S(M)$ is the symmetric power and $\Lambda(M)$ is the exterior power of a $\ka$-module $M$.
Thus we see that $\sigma_{H^1}(y)=ty$ and, by the naturality of the Bockstein homomorphism,
also $\sigma_{H^2}(x)=tx$. Therefore, by the naturality of the multiplicative structure on
group cohomology, for all $j>0$ we obtain:
\begin{equation}
\sigma_{H^{2j}}\left(x^j\right)=t^jx^j,\ \ \ \sigma_{H^{2j-1}}\left(x^{j-1}\ot y\right)=t^j\left(x^{j-1}\otimes y\right).\tag{$\bigstar$}
\end{equation}
Hence we see that $H^{2j}(\Zz/p,\ka)^{\sigma}=\ka x^j$ if and only if $r|j$ (recall that $r$ is the multiplicative order of $t$), and $H^{2j}(\Zz/p,\ka)^{\sigma}=0$ otherwise. A similar conclusion holds for $H^{2j-1}(\Zz/p,\ka)^{\sigma}$, $H^{2j}(\Zz/p,\ka)_{\sigma}$ and
$H^{2j-1}(\Zz/p,\ka)_{\sigma}$. Applying Theorem \ref{spectral}, we get that $H^0_{\sigma}(\Zz/p,\ka)=\ka$ and, for $n>0$, we obtain the following:
\[H^n_{\sigma}(\Zz/p,\ka)=
\left\{
\begin{array}{ll}
\ka\oplus \ka & \ \ \ \ \ \mbox{for $2r|n$};\\
\ka & \ \ \ \ \ \mbox{for $ 2r|n-1$};\\
\ka & \ \ \ \ \ \mbox{for $ 2r|n+1$};\\
0 & \ \ \ \ \ \mbox{otherwise.}
 \end{array}
 \right.\]

\item Let us now elaborate on the above example by adding an automorphism of scalars to the picture. Hence, let $F$ be an automorphism of $\ka$. Then $(\ka, F^{-1})$
is a difference $(\ka,F)[G]$--module and we are interested in its difference cohomology. We recall that $H^1(\Zz/p,\ka)=\mbox{Hom}_{\mathrm{Ab}}(\Zz/p,\ka)$,  which is naturally identified with $\ka$. After choosing $y\in \Ff_p$, we get the same formulas as in $(\bigstar)$ from the item $(1)$ above. Since each $H^n(\Zz/p,\ka)$ is a difference $(\ka,F)$-module, for $c\in \ka$ we obtain the following:
$$\sigma_{H^{2j}}\left(cx^j\right)=F^{-1}(c)t^jx^j,$$
$$\sigma_{H^{2j-1}}\left(cx^{j-1}\ot y\right)=F^{-1}(c)t^j\left(x^{j-1}\otimes y\right).$$
For $a\in\Ff_p\setminus \{0\}$, let $\ka^a$ stand for the eigenspace of $F$ regarded as an $\Ff_p$--linear automorphism of $\ka$ for the eigenvalue
$a$. Dually, let $\ka_a$ be the corresponding ``co--eigenspace'', i.e. the quotient $\Ff_p$--linear space:
$$\ka_a=\ka/\langle F(c)-ca\ |\ c\in \ka\rangle.$$
Therefore, for any non-negative integer $j$, we get by Theorem \ref{spectral}:
\[
H^{2j}_{\sigma}(\Zz/p,\ka)=\ka^{t^j}\oplus \ka_{t^j}, \ \ \
 H^{2j+1}_{\sigma}(\Zz/p,\ka)=\ka^{t^{j+1}}\oplus \ka_{t^{j}}.\]

\item If we consider a special case of the situation considered in the item $(2)$ above, where $A=\ka=\Ff_p^{\alg}$ and $\sigma_A=\fr_{\ka}$ is the Frobenius map, then by the results of \cite[Section 3]{KP4}, the difference module $H^*(\Zz/p,\ka)$ is \emph{$\sigma$-isotrivial}, i.e. we have the following isomorphism of difference modules
    $$H^*(\Zz/p,\ka)\simeq \left(\ka,\fr_{\ka}^{-1}\right)\otimes_{\left(\Ff_p,\id\right)} \left(H^*(\Zz/p,\ka)^{\sigma},\id\right).$$
    (To apply \cite[Fact 3.4(ii)]{KP4}, we need to know that $\sigma_{H^*}$ is a bijection, but it is the case since both $\sigma_G$ and $F$ are automorphisms.) Since $\ka^{\fr}=\Ff_p$, $\ka_{\fr}=0$ and each $H^n(\Zz/p,\ka)^{\sigma}$ is a 1-dimensional vector space over $\Ff_p$, we immediately (i.e. using neither the item $(1)$ nor the item $(2)$ above) get (by Theorem \ref{spectral}) the following isomorphism of $\Ff_p$--linear  spaces:
$$H^*_{\sigma}(\Zz/p,\ka)\simeq S\left(\Ff_p x\right)\otimes \Lambda\left(\Ff_p y\right)=H^*(\Zz/p,\Ff_p),$$
which coincides with the computations made in the item $(2)$.
\end{enumerate}
\end{example}

For a left $A[G]$--module $M$, let us denote by $M^{\infty}$ the induced difference $A[G]$--module, i.e.
$$M^{\infty}:=A[G][\sigma]\ot_{A[G]} M.$$
In order to describe $M^{\infty}$ more explicitly, we slightly extend the notation
introduced in Section 2, by setting $M^{(i)}$ to be the $A[G]$--module $M$ with the structure
twisted by  $\sigma^i$. Then, we have an isomorphism of $A[G]$--modules
\[M^{\infty}\simeq\bigoplus_{i\geqslant  0} M^{(i)}.\]
Under this identification, the difference structure on $M^{\infty}$ is given by the following shift:
\[\sigma_{M^{\infty}}(m_0,\ldots,m_i, 0,\ldots)=(0, m_0,\ldots,m_i,0,\ldots).\]
Let us now investigate the exact sequence from Theorem \ref{spectral} for the difference module $M^{\infty}$.
For this, we introduce the ``stable cohomology groups'' as
\[H^{j}_{\mathrm{st}}(G,M):=\underset{i}{\colim} H^j(G,M^{(i)}),\]
where the maps in the direct system are the restriction maps along $\sigma_G$.
\begin{remark}\label{stvsgen}
We give an interpretation of the stable cohomology in small dimensions.
\begin{enumerate}
\item The $0$-th stable cohomology group
$$H_{\mathrm{st}}^0(G,N)=\bigcup_{n=1}^{\infty}N^{\mathrm{Im}(\sigma^n_G)}$$
may be thought of as  the group of ``weak invariants'' of the action of $G$ on $N$.

\item Suppose that $N$ is a trivial $G$-module. Then we have
$$H^1_{\mathrm{st}}(G,N):=\colim (\Hom(G,N)\to \Hom(G,N)\to \ldots),$$
where the map producing the direct system is induced by $\sigma_G$. Hence $H^1_{\mathrm{st}}(G,N)$ can be considered as the effect of inverting formally the  above endomorphism on $\Hom(G,N)$.
\end{enumerate}
\end{remark}
 These stable cohomology groups play an important role in the comparison between rational and discrete
cohomology in \cite{ratgen}. The fact that, as we will see in a moment, they appear as difference cohomology groups is one of the main motivations for the present work. Namely, when we explicitly describe the action of $\sigma$ on
\[H^*(G,M^{\infty})\simeq H^*(G, \bigoplus_{i\geqslant  0} M^{(i)})\simeq
\bigoplus_{i\geqslant  0} H^*(G,M^{(i)}),\]
 we obtain that (after restricting to the summand $H^*(G,M^{(i)})$) this action is given by the map
\[\sigma_*:H^*(G,M^{(i)})\ra H^*(G,M^{(i+1)})\]
induced by $\sigma$\ on the cohomology. Thus we see that
$H^*(G,M^{\infty})^{\sigma}=0$, and using Theorem \ref{spectral} we get the following.
\begin{theorem}\label{disctelthm}
For any $A[G]$--module $M$ and $j>0$, there is an isomorphism:
	$$H^j_{\sigma}(G,M^{\infty})\simeq H^{j-1}_{\mathrm{st}}(G,M).$$
\end{theorem}
\begin{remark}\label{lvsr}
Apparently, there is no similar description of the stable cohomology in terms of cohomology of right difference modules. The technical obstacle for this is the fact that for a right difference $A[G]$--module $M$, the module of invariants $M^{G} $ is not preserved by $\sigma_M$. Therefore, there is no  Grothendieck spectral sequence analogous to the one which we used in the proof of Theorem \ref{spectral}. This is the main reason for which we have chosen to work with left difference modules in this paper, despite the fact that the condition $(\dag')$ looks more natural than the condition $(\dag)$ (both of them can be found before Proposition \ref{leftright}).
\end{remark}
%We finish this section with a comment about interpreting difference representations as certain morphisms into the general linear group.
%\begin{remark}\label{lambdag}
%An $A[G]$-module $M$ is the same as an $A$-module $M$ together with a group homomorphism $G\to \gl(M)$. In the difference case, we do not always have a %corresponding description. Let $M$ be an $A$-module, and for each $g\in G$, we fix an $A$-linear map $\lambda_g:M\to M$. Then $M$ is a left %$A[G][\sigma]$-module if and only if, for all $g,g'\in G$ we have $\lambda_{gg'}=\lambda_g\circ \lambda_{g'}$ and we have the following:
%\begin{equation}
%\sigma_{M}\circ \lambda_{\sigma_{G}(g)}=\lambda_g\circ \sigma_{M}.\tag{$\dagger''$}
%\end{equation}
%In the case when $\sigma_M$ is an automorphism, we can define a difference structure on the group $\gl(M)$ as follows:
%    $$\sigma_{\gl(M)}:\gl(M)\to \gl(M),\ \ \ \sigma_{\gl(M)}(\alpha):=\sigma_M^{-1}\circ \alpha\circ \sigma_M.$$
%    It is easy to see that the condition $(\dagger'')$ in in this case simplifies to the statement that the map $g\mapsto \lambda_g$ is a difference %homomorphism
%    $$\left(G,\sigma_G\right)\to \left(\gl(M),\sigma_{\gl(M)}\right).$$
%Hence (if $\sigma_M$ is an automorphism), a representation of a difference group $(G,\sigma_G)$ on a difference $A$-module $(M,s_M)$ is the same as a difference %group homomorphism from $(G,\sigma_G)$ to $(\gl(M),\sigma_{\gl(M)})$.

    %Since $\sigma_{\gl(M)})$ is an automorphism, each difference representation of $(G,\sigma_G)$ should
%\end{remark}

\section{Difference rational representations and cohomology}\label{secratcat1}

In this section, we introduce difference rational modules and difference rational cohomology. As rational representations and rational cohomology concern representations of algebraic groups, we will consider here representations of \emph{difference algebraic groups}, so we recall this notion first.
Let $\ka$ be our ground field.
\subsection{Difference algebraic groups}\label{secdag}
We take the categorical definition of a difference algebraic group appearing in \cite{Wib1}. When we say ``algebraic group'', we mean ``affine group scheme''. We do not care here about the finite-generation (or finite type) issues: neither in the schematic nor in the difference-schematic meaning. We comment about other possible approaches in Section \ref{secfunc}.

Let $\sigma:\ka\to \ka$ be a field homomorphism. The category of difference $(\ka,\sigma)$--algebras (denoted here by $\Alg_{(\ka,\sigma)}$) consists of commutative \ka--algebras $A$ equipped with ring endomorphisms $\sigma_A$ such that $(\sigma_A)|_{\ka}=\sigma$. A morphism between two $(\ka,\sigma)$-algebras $(A_1,\sigma_1),(A_2,\sigma_2)$ is a $\ka$--algebra morphism $f:A_1\to A_2$ such that
$$\sigma_2\circ f=f\circ \sigma_1.$$

 An \emph{affine difference algebraic group} is defined as a representable functor from the category $\Alg_{(\ka,\sigma)}$ to the category of groups. Note that it is in an exact analogy with the pure algebraic case. Such a functor is represented by a \emph{difference Hopf algebra} which may be defined as $(H,\sigma_H)$, where $H$ is a Hopf algebra over $\ka$, $\sigma^*(H)$ is obtained from $H$ using the base extension $\sigma: \ka \to \ka$ (i.e. $\sigma^*(H)=H\otimes_{\ka}(\ka,\sigma)$) and $\sigma_H:\sigma^*(H)\to H$ is a Hopf algebra morphism (see \cite[Def. 2.2]{Wib1}). Dualizing, we see that a difference algebraic group $\mathcal{G}$ is the same as a pair $(\mathbf{G},\sigma_{\mathbf{G}})$ where $\mathbf{G}$ is an affine group scheme over $\ka$ and $\sigma_{\mathbf{G}}:\mathbf{G}\to \sigma^*(\mathbf{G})$ is a group scheme morphism, where $\sigma^*(\mathbf{G})$ is again obtained from $\mathbf{G}$ using the base extension $\sigma: \ka \to \ka$.

Difference algebraic groups appeared first in the context of model theory (of difference fields) and yielded important applications to number theory (around Manin–-Mumford conjecture) and algebraic dynamics, see e.g. \cite{Hr9}, \cite{algdyn1}, \cite{algdyn2}, \cite{medsc}, \cite{KP4}. Difference algebraic groups also appear as the Galois groups of certain linear differential equations \cite{vhw} and linear difference equations \cite{ow}.

We are mostly interested in the case when $\mathbf{G}$ is defined over the field of constants of $\sigma$ (see Section \ref{ccfs}). In such a case, one can replace the difference field $(\ka,\sigma)$ with the difference field $(\fix(\sigma),\id)$.
Therefore, in the rest of Section \ref{secratcat1}, we  assume that   $\sigma=\id_{\ka}$. In Section \ref{secfunc}, we discuss our attempts to define a more general notion of a difference rational representation, which covers the case of an arbitrary base difference field $(\ka,\sigma)$ (see also Remark \ref{remsrat}).

\subsection{Difference rational representations}\label{sec42}
Let $\mathbf{G}$ be a $\ka$--affine group scheme with an endomorphism $\sigma_{\mathbf{G}}$.  Its representing ring $\ka[\mathbf{G}]$ is a Hopf algebra over $\ka$ with a $\ka$--Hopf algebra endomorphism, denoted here by the same symbol $\sigma_{\mathbf{G}}$.
 We would like to introduce the notion of a difference rational $\bf G$--module.
We recall from classical algebraic geometry (see \cite{jantzen07}) that for a $\ka$--affine group scheme $\mathbf{G}$, a left
rational $\mathbf{G}$--module (or a rational representation of $\mathbf{G}$) is a functor
$$M: \mathrm{Alg}_{\ka}\longrightarrow \mathrm{Mod}_{\ka}$$
such that for any $\ka$--algebra $A$, we have $M(A)=M(\ka)\ot A$, and each $M(A)$ is equipped with a natural
(in $A\in \mathrm{Alg}_{\ka}$) left action of the group $\mathbf{G}(A)$ through
$A$--linear transformations. The left rational $\mathbf{G}$--modules with
the morphisms being the natural transformations form the Abelian category
$\crat$. Given $M\in\crat$, one can construct a natural structure of a \emph{right} $\ka[G]$--comodule on $M(\ka)$. The assignment
$M\mapsto M(\ka)$ gives an equivalence between the category $\crat$
and the category of right $\ka[\mathbf{G}]$-comodules (see \cite[Section I.2.8]{jantzen07}). The inverse is explicitly given by the following construction. An element
$$g\in \mathbf{G}(A)=\Hom_{\mathrm{Alg}_{\ka}}(\ka[\mathbf{G}],A)$$
acts on $M(A)=M(\ka)\ot A$ by the composite
$$(\id\ot m)\circ (\id\ot g \ot \id)\circ(\Delta_M\ot \id),$$
where
$$\Delta_M:M(\ka)\to M(\ka)\ot \ka[\mathbf{G}]$$
is the comodule map on $M(\ka)$, and $m$ is the multiplication
on $A$. From now on, if no confusion can arise, we will identify $M$ with $M(\ka)$.

Let us come back to the situation when $\mathbf{G}$ is additionally equipped
with an endomorphism $\sigma_{\mathbf{G}}$.
A natural adaptation of the concept of a difference representation to the context
of difference algebraic groups is the following.
\begin{definition}
A \emph{difference rational representation} of a difference group $(\mathbf{G},\sigma_{\mathbf{G}})$ is a pair $(M,\sigma_M)$ consisting of a left rational $\mathbf{G}$--module $M$ and a natural transformation $\sigma_M: M\ra M$ such that for each $A\in \mathrm{Alg}_{\ka}$, the $A$--module
$M(A)$ becomes a left difference $A[\mathbf{G}(A)]$--module with
$\sigma_{M(A)}$ being $\sigma_M(A)$, and $\sigma_{A[\mathbf{G}(A)]}$ is given by the following formula:
\[\sigma_{A[\mathbf{G}(A)]}\left(\sum a_i g_i\right):=\sum a_i\sigma_{\mathbf{G}(A)}(g_i).\]

Let $(M,\sigma_M),(N,\sigma_N)$ be rational difference $(\mathbf{G},\sigma_{\mathbf{G}})$-modules. We call a transformation of  functors $f: M\ra N$ a \emph{difference $\mathbf{G}$--homomorphism}, if for any $\ka$--algebra $A$,
$$f(A):M(A)\to N(A)$$
is a homomorphism of difference $A[\mathbf{G}(A)]$--modules.
\end{definition}
Similarly as in Section \ref{discat}, we will often skip the endomorphisms from the notation and simply say that $M$ is a difference rational representation of $\mathbf{G}$. The difference rational representations of $\mathbf{G}$ with difference
$\mathbf{G}$--homomorphisms obviously form a category, which we denote by $\catgrs$.
\begin{remark}\label{comp}
We can find a similar interpretation of our difference rational representations as the one in Remark \ref{lambdag}. We consider $\gl(M)$ as a $\ka$-group functor, see \cite[Section I.2.2]{jantzen07}. In the case when $\sigma_M:M\to M$ is a $\ka$-linear automorphism, it induces the inner automorphism of this  $\ka$-group functor:
    $$\sigma_{\gl(M)}:\gl(M)\to \gl(M).$$
    Then enhancing $(M,\sigma_M)$ with the structure of a $(\mathbf{G},\sigma_{\mathbf{G}})$--module is the same as giving a morphism of difference $\ka$-group functors as below:
    $$\left(\mathbf{G},\sigma_{\mathbf{G}}\right)\to \left(\gl(M),\sigma_{\gl(M)}\right).$$
\end{remark}	
Keeping in mind the results of Section \ref{discat} and
the case of rational representations, we obtain two  equivalent descriptions
of the category $\catgrs$. Analogously as in Section \ref{secaem}, for a rational $\mathbf{G}$--module $M$, we denote by
$M^{(1)}$ the  $\mathbf{G}$--module structure on $M$ twisted by $\sigma_{\mathbf{G}}$. If we take the comodule point of view, then
 the comodule map on $M^{(1)}$ is given by the following composite:
$$(\id\ot \sigma_{\mathbf{G}})\circ\Delta_M:M^{(1)}\to M^{(1)}\otimes \ka[\mathbf{G}].$$
Then we have the following.
\begin{prop}\label{srateq}
Let $\mathbf{G}$ be an affine difference group scheme.  Then 	the following categories are equivalent.
	\begin{enumerate}
\item The category $\catgrs$.
\item The category of pairs $(M,\sigma_M)$, where $M$ is a
rational $\mathbf{G}$--module and $\sigma_M: M^{(1)}\ra M$ is a $\mathbf{G}$--homomorphism.
\item The category of pairs $(M,\sigma_M)$, where $M$ is a right $\ka[\mathbf{G}]$--comodule and $\sigma_M: M\ra M$ is a \ka--linear map satisfying the following identity:
\begin{equation}
\Delta_M\circ \sigma_M=(\sigma_M\ot \sigma_{\mathbf{G}})\circ\Delta_M.\tag{$*$}
\end{equation}
\end{enumerate}
\end{prop}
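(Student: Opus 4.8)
The plan is to establish the equivalences by constructing explicit functors between the three categories and verifying that the defining conditions match up under translation. The key observation is that all three descriptions encode the same data, namely a rational $\mathbf{G}$--module structure together with a compatible endomorphism, and the content of the proposition is merely that the three ways of phrasing ``compatible'' coincide. First I would recall from Section \ref{sec42} the equivalence $M\mapsto M(\ka)$ between $\crat$ and the category of right $\ka[\mathbf{G}]$--comodules; this reduces the comparison of (1) with (3) to checking that under this equivalence the difference condition defining $\catgrs$ becomes precisely the identity $(*)$.

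For the equivalence of (1) and (2), I would unwind the definition of a difference rational representation functorially. Given $(M,\sigma_M)$ in $\catgrs$, the natural transformation $\sigma_M$ must make each $\sigma_{M(A)}$ satisfy condition $(\dagger)$ relative to the twisted group ring $A[\mathbf{G}(A)]$; rephrasing $(\dagger)$ exactly as in Section \ref{secaem} shows that $\sigma_{M(A)}:M(A)^{(1)}\to M(A)$ is $A[\mathbf{G}(A)]$--linear for every $A$. Since naturality in $A$ together with $A$--linearity for all $A$ is exactly what it means to be a morphism in $\crat$, this says $\sigma_M:M^{(1)}\to M$ is a $\mathbf{G}$--homomorphism, which is the data in (2). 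Conversely, any such $\mathbf{G}$--homomorphism yields the required difference structure on each $M(A)$ by evaluation. The morphisms plainly correspond, so (1) and (2) are isomorphic categories.

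For the passage from (2) to (3), I would use the explicit formula for the $\mathbf{G}$--action in terms of the comodule map $\Delta_M$ recalled just before the statement, and the description, also given there, that the comodule map on $M^{(1)}$ is $(\id\ot\sigma_{\mathbf{G}})\circ\Delta_M$. The condition that $\sigma_M:M^{(1)}\to M$ is a $\mathbf{G}$--homomorphism translates, in comodule language, into the commutativity of a square expressing that $\sigma_M$ intertwines the comodule map of $M^{(1)}$ with that of $M$. Writing this square out gives exactly
\[\Delta_M\circ\sigma_M=(\sigma_M\ot\sigma_{\mathbf{G}})\circ\Delta_M,\]
which is $(*)$. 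This is the step where the main care is needed: one must track the twist carefully and confirm that the $\ka$--linearity of $\sigma_M$ at the level of $M=M(\ka)$, together with $(*)$, automatically reproduces the difference condition $(\dagger)$ at every algebra $A$, rather than only at $A=\ka$.

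The step I expect to be the main obstacle is precisely this last compatibility: verifying that the single identity $(*)$ on the comodule $M(\ka)$ propagates to the condition $(\dagger)$ on $M(A)=M(\ka)\ot A$ for \emph{all} $A$, uniformly and naturally. Concretely, one must substitute the explicit composite $(\id\ot m)\circ(\id\ot g\ot\id)\circ(\Delta_M\ot\id)$ for the $\mathbf{G}(A)$--action into $(\dagger)$, use $(*)$ to rewrite $\Delta_M\circ\sigma_M$, and check that the twist by $\sigma_{\mathbf{G}}$ appearing in $\sigma_{A[\mathbf{G}(A)]}$ matches the $\sigma_{\mathbf{G}}$ produced by $(*)$. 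Once this bookkeeping is carried out, the three functors are mutually inverse and respect morphisms, so the three categories are equivalent.
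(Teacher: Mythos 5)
Your proof is correct. The paper in fact gives no proof of this proposition at all --- it is stated as an immediate consequence of the preceding discussion (the equivalence $M\mapsto M(\ka)$ between $\crat$ and right $\ka[\mathbf{G}]$--comodules, and the displayed formula $(\id\ot\sigma_{\mathbf{G}})\circ\Delta_M$ for the comodule map of $M^{(1)}$) --- and your verification is precisely the routine unwinding the authors intend, including the one step that genuinely needs checking, namely that the single identity $(*)$ on $M(\ka)$ propagates via $\sigma_{M(A)}=\sigma_M\ot\id_A$ to condition $(\dagger)$ over every $\ka$--algebra $A$.
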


\begin{remark}\label{remsrat}
A difference rational representation is a natural (in $A\in \Alg_{\ka}$) collection of difference $A[\mathbf{G}(A)]$--modules.
Hence we see that we work in a less general context than the one in Section 3, since
we have no endomorphism on $A$ and neither on $\ka$. It would be tempting to introduce
difference rational representations as  functors on the category of difference
algebras over \ka\ or even over a difference field $(\ka,\sigma)$. The resulting category
is much more complicated, e.g. we have not even succeeded yet in showing
that it is Abelian. Since the simpler approach in this section is sufficient for homological
applications we have in mind, we decided to stick to it in this paper.
We discuss possible generalizations of difference representation theory and its relations with the other approaches
in Section 5.
\end{remark}

\begin{example}\label{3ex}
We point out here three important examples of difference rational $\mathbf{G}$--modules.
\begin{enumerate}
\item The \emph{trivial difference $\mathbf{G}$--module}. Clearly, the $\ka$--algebra unit map
$\ka\to \ka[\mathbf{G}]$ endows $(\ka, \id)$
with the structure of a difference rational $\mathbf{G}$--module.

\item The {\em regular difference $\mathbf{G}$--module} is defined as follows. We put
$$M:=\ka[\mathbf{G}],\ \ \ \sigma_{M}:=\sigma_{\mathbf{G}}.$$
Then the condition $(*)$ in Proposition \ref{srateq}(3) is satisfied, since $\sigma_{\mathbf{G}}$ is a homomorphism of coalgebras.

\item The last example corresponds to the induced module $\ka[G][\sigma]\ot_{\ka[G]} M$
from Section 3. It could be described in terms of cotensor product, but we prefer
the following explicit description. For a rational $\mathbf{G}$--module $M$, we set
$$M^{\infty}:=\bigoplus_{i=0}^{\infty} M^{(i)}$$
as a rational $\mathbf{G}$--module.
Since $(M^{\infty})^{(1)}=\bigoplus_{i=1}^{\infty} M^{(i)}$, the inclusion map
\[\bigoplus_{i=1}^{\infty} M^{(i)}\subset \bigoplus_{i=0}^{\infty} M^{(i)}\]
defines the structure of a difference rational $\mathbf{G}$--module on $M^{\infty}$. Note that this inclusion map is the same as the ``right-shift'' map appearing before Remark \ref{stvsgen}.
\end{enumerate}
\end{example}
 In certain simple cases, the category $\catgrs$ can
 be fully described. The following example should be thought of as the first step
 towards understanding difference rational representations of reductive groups with the Frobenius endomorphism.

Let $\ka$ be a field of positive characteristic $p$, $\gm$ be the multiplicative group over $\ka$ and $\fr:\gm\to \gm$ be the (relative) Frobenius morphism. Then the category $\Mod^{\sigma}_{\gm}$ can be explicitly described.
Let $\Mod^{\Zz,p}_{\ka[x]}$ denote the category of $\Zz$--graded $\ka[x]$-modules satisfying the following condition (for each $j\in \Zz$):
$$xM^j\subseteq M^{pj}.$$
We set  $X:=(\Zz\setminus p\Zz)\cup \{0\}$, and for $j\in X$, we define
$\Mod^{\Zz,p}_{\ka[x],j}$ as the full subcategory of
the category $\Mod^{\Zz,p}_{\ka[x]}$ consisting of modules concentrated in the degrees of the form $p^nj$ for $n\in \Nn$.
Then we have the following.
\begin{prop}\label{rgm}
	The category $\modd_{\gm}^{\sigma}$ admits the following description.
\begin{enumerate}
\item	There is an equivalence of categories
	\[
\modd_{\gm}^{\sigma}\ \simeq\ \Mod^{\Zz,p}_{\ka[x]}.
	\]

\item There is a  decomposition into infinite product
	\[
	\Mod^{\Zz,p}_{\ka[x]}\ \simeq\ \prod_{j\in X} \Mod^{\Zz,p}_{\ka[x],j}.
	\]

\item The category  $\Mod^{\Zz,p}_{\ka[x],0}$ is equivalent to the category of $\ka[x]$--modules, while the category
$\Mod^{\Zz,p}_{\ka[x],j}$ for $j\neq 0$ is equivalent to the category of $\Nn$--graded modules over the graded $\ka$--algebra
$\ka[x]$, where $|x|=1$.
\end{enumerate}
\end{prop}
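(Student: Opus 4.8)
The plan is to prove the three parts in sequence, building up from the definition of $\modd_{\gm}^{\sigma}$ via the comodule description in Proposition \ref{srateq}(3), since $\gm$ is abelian and its representation theory is completely governed by its character lattice.

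First I would unwind what a difference rational $\gm$--module is using Proposition \ref{srateq}(3). Since $\ka[\gm]=\ka[x,x^{-1}]$ and a rational $\gm$--module is the same as a $\Zz$--graded $\ka$--vector space $M=\bigoplus_{j\in\Zz}M_j$ (the standard weight-space decomposition, \cite[Section I.2.11]{jantzen07}), the comodule map is $\Delta_M(v)=v\ot x^j$ for $v\in M_j$. The Frobenius $\fr:\gm\to\gm$ corresponds on characters to multiplication by $p$, i.e. $\sigma_{\gm}$ sends $x\mapsto x^p$ on $\ka[\gm]$. Plugging into the identity $(*)$, namely $\Delta_M\circ\sigma_M=(\sigma_M\ot\sigma_{\gm})\circ\Delta_M$, I would read off that for $v\in M_j$ the element $\sigma_M(v)$ must lie in the weight space $M_{pj}$. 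In other words, the datum of $\sigma_M$ is exactly a $\ka$--linear map raising weight $j$ to weight $pj$. Setting $x:=\sigma_M$ and viewing $M$ as a $\Zz$--graded module over $\ka[x]$ with $x$ acting in the prescribed degree--multiplying--by--$p$ fashion, this is precisely the condition $xM^j\subseteq M^{pj}$ defining $\Mod^{\Zz,p}_{\ka[x]}$. Checking that morphisms correspond on both sides (difference $\mathbf{G}$--homomorphisms are exactly graded $\ka[x]$--module maps) is routine, giving the equivalence of part (1).

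For part (2), I would observe that the weight--multiplying--by--$p$ action of $x$ partitions $\Zz$ into orbits under $j\mapsto pj$. Every nonzero integer can be written uniquely as $p^n j$ with $n\in\Nn$ and $j\in\Zz\setminus p\Zz$, and $0$ is its own orbit; hence $X=(\Zz\setminus p\Zz)\cup\{0\}$ is a complete set of orbit representatives. Since $x$ never mixes different orbits, any object of $\Mod^{\Zz,p}_{\ka[x]}$ splits canonically as the direct sum (equivalently, product, as these orbits are disjoint and the splitting is functorial) of its pieces supported on each orbit, and each such piece lies in $\Mod^{\Zz,p}_{\ka[x],j}$. This yields the product decomposition. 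For part (3), the orbit of $0$ is a single degree on which $x$ acts as an ordinary endomorphism, so $\Mod^{\Zz,p}_{\ka[x],0}$ is just the category of $\ka[x]$--modules; for $j\neq 0$ the orbit $\{p^n j: n\in\Nn\}$ is order--isomorphic to $\Nn$ via $p^n j\mapsto n$, and under this reindexing $x$ shifts degree by $1$, identifying $\Mod^{\Zz,p}_{\ka[x],j}$ with $\Nn$--graded $\ka[x]$--modules where $|x|=1$.

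The only genuinely delicate point is the very first translation: correctly matching the \emph{relative} Frobenius $\fr$ on the character lattice (multiplication by $p$) against the comodule identity $(*)$, and being careful that $\sigma_M$ raises rather than lowers weights. Everything after that is bookkeeping about the orbit structure of the map $j\mapsto pj$ on $\Zz$. I therefore expect the main obstacle to be purely a matter of fixing conventions so that the direction of the weight shift in $(*)$ lands on $xM^j\subseteq M^{pj}$ rather than its mirror image; once that is pinned down, parts (2) and (3) follow from elementary combinatorics of the orbits.
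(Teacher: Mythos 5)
Your proof is correct and follows essentially the same route as the paper: both reduce to the weight-space decomposition of rational $\gm$--modules (via \cite[Section I.2.11]{jantzen07}) and observe that the Frobenius twist multiplies weights by $p$, so that $\sigma_M(M_j)\subseteq M_{pj}$, which is exactly the defining condition of $\Mod^{\Zz,p}_{\ka[x]}$; the paper phrases this via the twisted module $M^{(1)}$ while you verify it through the comodule identity $(*)$ of Proposition \ref{srateq}(3), a purely cosmetic difference. The orbit combinatorics you spell out for parts (2) and (3) is precisely what the paper dismisses as ``the rest is straightforward,'' and your treatment of it is accurate.
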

\begin{proof}  Since $\gm=\mathrm{Diag}(\Zz)$, we can use the results from \cite[Section I.2.11]{jantzen07}. For $M\in \mathrm{Mod}^{\sigma}_{\gm}$, we take a decomposition of the rational module $M\simeq
\bigoplus M_j$ into isotypical rational representations of $\gm$, i.e. each $M_j$ is a direct sum of equivalent irreducible representations such that for each $A\in \Alg_{\ka}$, $a\in\gm(A)$ and $m\in M_j(A)$, we have
$$a\cdot m=a^j m.$$
Then, since $(M_j)^{(1)}=(M^{(1)})_{pj}$,  we have $\sigma_M(M_j)\subseteq M_{pj}$. This turns $M$ into an object of the category
$\Mod^{\Zz,p}_{\ka[x]}$. The rest is straightforward.
\end{proof}
%\begin{remark}
%By \cite[Section I.2.13]{jantzen07}, any rational representation is locally finite, i.e. it is a union of finite-dimensional representations. Taking the %difference rational representation of $(\gm,\fr)$ corresponding to $\bigoplus_j \ka x^{p^j}$ (see Proposition \ref{rgm} above), we see that difference rational %representations need not be locally finite.
%\end{remark}

\subsection{Difference rational cohomology}
We would like to develop now some homological algebra in the category $\catgrs$. Firstly, it is obvious that $\catgrs$ is an Abelian
category with the kernels and cokernels inherited from the category $\crat$. However, the existence of enough injectives
is not a priori obvious.
We shall construct injective objects in the category $\catgrs$ by using a particular case of induction. Let $(M,\sigma_M)$ be a $\ka$-linear vector space with an endomorphism. Then, $M\ot\ka[\mathbf{G}]$
with the comodule map $\id\ot \Delta_G$ and the endomorphism $\sigma_M\ot \sigma_{\mathbf{G}}$
satisfies the condition $(*)$ from Proposition \ref{srateq}(3), hence this data defines a difference $\mathbf{G}$--module. This construction is clearly natural, hence it gives rise to a functor
\[\mathrm{\sigma ind}_1^{\mathbf{G}}: \modd_{\ka[x]}\longrightarrow
\catgrs .\]
We will show (similarly to the classical context) that this difference induction functor is right adjoint
to the forgetful functor
\[\mathrm{\sigma res}_1^{\mathbf{G}}:
\catgrs\ra \modd_{\ka[x]}.\]
\begin{prop}\label{preserv}
The functor $\mathrm{\sigma ind}_1^{\mathbf{G}}$ is right adjoint to the functor $\mathrm{\sigma res}_1^{\mathbf{G}}$. Consequently, the functor $\mathrm{\sigma ind}_1^{\mathbf{G}}$ preserves injective objects.
\end{prop}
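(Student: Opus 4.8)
The plan is to establish the adjunction
\[
\Hom_{\catgrs}\!\left(N,\mathrm{\sigma ind}_1^{\mathbf{G}}(V)\right)\;\simeq\;
\Hom_{\modd_{\ka[x]}}\!\left(\mathrm{\sigma res}_1^{\mathbf{G}}(N),V\right)
\]
naturally in a difference rational $\mathbf{G}$--module $N$ and a difference $\ka$--vector space $V\in\modd_{\ka[x]}$, and then deduce the preservation of injectives by the standard formal argument. First I would recall that in the purely rational setting the functor $\mathrm{ind}_1^{\mathbf{G}}(V)=V\ot\ka[\mathbf{G}]$ is right adjoint to the forgetful functor, with the unit/counit given by the comodule map and by $\id\ot\epsilon$ (evaluation against the counit $\epsilon:\ka[\mathbf{G}]\to\ka$); this is \cite[Section I.3.4]{jantzen07}. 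The whole task therefore reduces to checking that this classical adjunction isomorphism is compatible with the difference structures, i.e. that a $\mathbf{G}$--homomorphism $f:N\to V\ot\ka[\mathbf{G}]$ commutes with the endomorphisms (in the sense of condition $(\dagger)$, equivalently the identity $(*)$ of Proposition \ref{srateq}(3)) precisely when its rational adjunct $(\id\ot\epsilon)\circ f:N\to V$ intertwines $\sigma_N$ with $\sigma_V$.

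The key computation is thus the following bookkeeping with the endomorphisms. Writing $\bar f:=(\id\ot\epsilon)\circ f$ for the rational adjunct and recalling that $\sigma_{\mathrm{\sigma ind}_1^{\mathbf{G}}(V)}=\sigma_V\ot\sigma_{\mathbf{G}}$, one direction is immediate: if $f$ commutes with the distinguished endomorphisms, then composing with $\id\ot\epsilon$ and using $\epsilon\circ\sigma_{\mathbf{G}}=\epsilon$ (the augmentation is a morphism of difference rings, exactly as observed at the start of Section \ref{discat}) yields $\bar f\circ\sigma_N=\sigma_V\circ\bar f$. For the reverse direction I would reconstruct $f$ from $\bar f$ by the classical formula $f=(\bar f\ot\id)\circ\Delta_N$ and then verify $(*)$ directly: expand $\Delta_{V\ot\ka[\mathbf{G}]}\circ f$ and $(\sigma_{V\ot\ka[\mathbf{G}]}\ot\sigma_{\mathbf{G}})\circ\Delta_{V\ot\ka[\mathbf{G}]}\circ f$, substitute the comodule identity $(*)$ for $N$ (namely $\Delta_N\circ\sigma_N=(\sigma_N\ot\sigma_{\mathbf{G}})\circ\Delta_N$) together with coassociativity of $\Delta_N$ and the fact that $\sigma_{\mathbf{G}}$ is a coalgebra map, and check that the two expressions coincide once $\bar f\circ\sigma_N=\sigma_V\circ\bar f$ is assumed. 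This is a routine diagram chase, and it is the only place where the difference hypotheses genuinely enter.

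Once the adjunction is in place, the consequence is formal: $\mathrm{\sigma ind}_1^{\mathbf{G}}$ is right adjoint to $\mathrm{\sigma res}_1^{\mathbf{G}}$, and the forgetful functor $\mathrm{\sigma res}_1^{\mathbf{G}}$ is exact, since kernels and cokernels in $\catgrs$ are computed in the underlying vector spaces (the difference structure being a linear datum). A right adjoint to an exact functor preserves injectives, so $\mathrm{\sigma ind}_1^{\mathbf{G}}$ carries injective objects of $\modd_{\ka[x]}$ to injective objects of $\catgrs$; I would cite the general statement \cite[Chapter 2.3]{WeiHA} here.

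The main obstacle I anticipate is purely the coalgebra bookkeeping in the reverse direction of the adjunction: one must be careful that the twist by $\sigma_{\mathbf{G}}$ in the definition of $M^{(1)}$ and the twist built into $\sigma_{\mathrm{\sigma ind}_1^{\mathbf{G}}(V)}=\sigma_V\ot\sigma_{\mathbf{G}}$ match up correctly, and that no additional hypothesis (such as invertibility of $\sigma_M$, which Remark \ref{comp} warns is \emph{not} available in general) sneaks in. Everything else—exactness of restriction, the formal injective-preservation argument—is standard and poses no difficulty.
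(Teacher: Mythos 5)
Your proposal is correct and takes essentially the same route as the paper: both reduce to the classical induction/restriction adjunction, described explicitly by $f\mapsto(f\ot\id)\circ\Delta_N$ with inverse $g\mapsto(\id\ot\epsilon)\circ g$, then check that these mutually inverse assignments preserve compatibility with the difference structures, and finally deduce preservation of injectives formally from exactness of the left adjoint. The only difference is that you spell out the coalgebra bookkeeping (using $\epsilon\circ\sigma_{\mathbf{G}}=\epsilon$, the identity $(*)$ for $N$, and that $\sigma_{\mathbf{G}}$ is a coalgebra map) which the paper compresses into ``an explicit calculation.''
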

\begin{proof} We take  $(N,\sigma_N)\in\catgrs $ and $(M,\sigma_M)\in \mathrm{Mod}_{\ka[x]}$.
After forgetting the endomorphisms $\sigma_N, \sigma_M$, we have (by the classical
adjunction) a natural isomorphism
\[\homm_{{\footnotesize \modd}_{\ka}}(N,M)\simeq
\homm_{\footnotesize{\modd}_\mathbf{G}}(N,M\ot \ka[\mathbf{G}]).\]
This isomorphism can be explicitly described as taking a \ka--linear map
$f:N\longrightarrow M$ to the composite $(f\ot \id)\circ\Delta_N$. The inverse
is given by postcomposing with the counit in $\ka[\mathbf{G}]$. Then an explicit
calculation shows that the both assignments preserve morphisms satisfying the condition
$(*)$ from Proposition \ref{srateq}(3), which proves our adjunction.
Preserving injectives is a formal consequence of having exact left adjoint.
\end{proof}
Now we construct injective objects in $\catgrs$ by a standard argument.
\begin{cor}\label{inj}
Any object $M$ in the category $\catgrs$ embeds into an injective object.
\end{cor}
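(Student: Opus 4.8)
The plan is to imitate the classical construction of enough injectives in the category of rational $\mathbf{G}$--modules, transported to the difference setting via the difference induction functor $\mathrm{\sigma ind}_1^{\mathbf{G}}$ whose adjunction and injective--preservation properties were just established in Proposition \ref{preserv}. The strategy rests on two ingredients: first, that the ground category $\modd_{\ka[x]}$ of difference $\ka$--vector spaces (equivalently, $\ka[x]$--modules) already has enough injectives, being a module category over a ring; and second, that the unit of the adjunction gives a natural embedding of any $M\in\catgrs$ into $\mathrm{\sigma ind}_1^{\mathbf{G}}(\mathrm{\sigma res}_1^{\mathbf{G}}(M))$.

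Concretely, I would proceed as follows. Take $M\in\catgrs$ and let $M_0:=\mathrm{\sigma res}_1^{\mathbf{G}}(M)$ be its underlying difference $\ka[x]$--module. Since $\modd_{\ka[x]}$ has enough injectives, choose a monomorphism $\iota: M_0\hookrightarrow I$ into an injective $\ka[x]$--module $I$. Applying the exact functor $\mathrm{\sigma ind}_1^{\mathbf{G}}$ yields a monomorphism
\[
\mathrm{\sigma ind}_1^{\mathbf{G}}(\iota):\ \mathrm{\sigma ind}_1^{\mathbf{G}}(M_0)\ \hookrightarrow\ \mathrm{\sigma ind}_1^{\mathbf{G}}(I),
\]
and by Proposition \ref{preserv} the target $\mathrm{\sigma ind}_1^{\mathbf{G}}(I)$ is injective in $\catgrs$. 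It therefore suffices to embed $M$ into $\mathrm{\sigma ind}_1^{\mathbf{G}}(M_0)$, and for this I would use the unit of the adjunction
\[
\eta_M:\ M\ \ra\ \mathrm{\sigma ind}_1^{\mathbf{G}}\bigl(\mathrm{\sigma res}_1^{\mathbf{G}}(M)\bigr),
\]
which in the comodule description of Proposition \ref{srateq}(3) is given by the comodule map $\Delta_M: M\to M\ot\ka[\mathbf{G}]$. Composing, $\mathrm{\sigma ind}_1^{\mathbf{G}}(\iota)\circ\eta_M$ embeds $M$ into an injective object, as required.

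The one point that genuinely needs checking, rather than invoking a black box, is that the unit $\eta_M$ is a \emph{monomorphism} in $\catgrs$. In the classical rational setting this is standard: the counit $\varepsilon$ of $\ka[\mathbf{G}]$ satisfies $(\id\ot\varepsilon)\circ\Delta_M=\id_M$, so $\Delta_M$ is split injective as a $\ka$--linear map, hence injective. The same computation shows $\eta_M$ is injective after forgetting the difference structure, and since kernels in $\catgrs$ are computed in $\crat$ (as noted at the start of this subsection), injectivity of the underlying map forces $\eta_M$ to be a monomorphism in $\catgrs$. One must only confirm that $\eta_M$ is a genuine morphism in $\catgrs$, i.e. that $\Delta_M$ intertwines the difference structures --- but this is precisely the identity $(*)$ of Proposition \ref{srateq}(3) satisfied by $M$, so it is automatic.

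Thus the main obstacle is not any single hard estimate but rather the bookkeeping of verifying that the classical splitting of $\Delta_M$ is compatible with the $\sigma$--structures and survives in $\catgrs$; once that is in place, the result is formal. I would present the corollary's proof as the two--line composition above, relegating the verification that $\eta_M$ is a difference morphism to the already--proven condition $(*)$.
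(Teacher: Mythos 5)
Your proof is correct and follows essentially the same route as the paper: embed $\mathrm{\sigma res}_1^{\mathbf{G}}(M)$ into an injective $I$ in $\modd_{\ka[x]}$, use the unit of the adjunction (the comodule map $\Delta_M$) to embed $M$ into $\mathrm{\sigma ind}_1^{\mathbf{G}}\circ\mathrm{\sigma res}_1^{\mathbf{G}}(M)$, and conclude via Proposition \ref{preserv}. The only difference is that you spell out the details the paper leaves implicit --- that the unit is $\Delta_M$, split injective by the counit, and a morphism in $\catgrs$ by condition $(*)$ --- which is a welcome elaboration, not a divergence.
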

\begin{proof}
Let $\mathrm{\sigma res}_1^{\mathbf{G}}(M)\to I$ be an embedding
in the  category $\modd_{\ka[x]}$, where $I$ is injective. Then we take the chain of embeddings
\[ M\to \mathrm{\sigma ind}_1^{\mathbf{G}}\circ\mathrm{\sigma res}_1^{\mathbf{G}}(M)\to \mathrm{\sigma ind}_1^{\mathbf{G}}(I),\]
and observe that
$\mathrm{\sigma ind}_1^{\mathbf{G}}(I)$ is injective
by Proposition \ref{preserv}.
\end{proof}
Since we have enough injective objects, we can develop now homological algebra in the category $\catgrs$.
\begin{definition}
For a difference rational $\mathbf{G}$--module $M$, we define the \emph{difference rational cohomology groups} (see Example \ref{3ex}(1)) as follows:
\[H^n_{\sigma}(\mathbf{G},M):=\mbox{Ext}^n_{\scriptsize{\modd}_{\mathbf{G}}^{\sigma}}(\ka,M).\]
\end{definition}
We would like to obtain a short exact sequence relating difference rational
and rational cohomology groups. We proceed similarly as in Section \ref{discat}. First, we recall
 that for a rational ${\mathbf{G}}$--module $M$, the $\ka$-vector space
$\Hom_{\scriptsize{\modd}_{\mathbf{G}}^{\sigma}}(\ka,M)$ can be identified with
$$M^{\mathbf{G}}:=\{m\in M\ |\ \Delta_M(m)=m\ot 1\}.$$
By the condition $(*)$ from Proposition \ref{srateq}(3), we immediately get that for a difference rational  $\mathbf{G}$--module $M$, the \ka--module of invariants $M^{{\mathbf{G}}}$ is preserved by $\sigma_M$. Therefore, the functor $(-)^{{\mathbf{G}}}$ can be thought of as a functor
from $\catgrs$ to $\Mod_{\ka[x]}$. Since we can make the following identification:
$$\Hom_{\scriptsize{\modd}_{\mathbf{G}}^{\sigma}}(\ka,M)=M^{{\mathbf{G}}}\cap M^{\sigma_M},$$
we can factor the above Hom-functor through the category $\modd_{\ka[x]}$ as
 $$\Hom_{\scriptsize{\modd}_{\mathbf{G}}^{\sigma}}(\ka,-)=(-)^{\sigma_M}\circ (-)^{{\mathbf{G}}}.$$
 Now, we recall from the proof of Corollary \ref{inj} that for an injective cogenerator
 $I$ of $\modd_{\ka[x]}$, $I\ot \ka[\bf{G}]$ is an injective cogenerator of $\catgrs$. Then we see that
 $$(I\ot \ka[\mathbf{G}])^{{\mathbf{G}}}=I,$$
hence the functor $(-)^{{\mathbf{G}}}$ preserves injectives. Therefore, we can apply the Grothendieck spectral
sequence to our factorization of the functor $\Hom_{\scriptsize{\modd}_{\mathbf{G}}^{\sigma}}(\ka,-)$ and, similarly as in Theorem \ref{spectral}, we get the following.
\begin{theorem}\label{ratseq}
Let $M$ be a difference rational $\mathbf{G}$--module. Then for any $j\geqslant  0$,
there is a short exact sequence (where $H^{-1}(\mathbf{G},M):=0$):
\[0\ra H^{j-1}(\mathbf{G},M)_{\sigma}\ra H^j_{\sigma}(\mathbf{G},M)\ra H^j(\mathbf{G},M)^{\sigma}\ra 0.\]
	\end{theorem}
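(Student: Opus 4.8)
The strategy is to mirror exactly the proof of Theorem \ref{spectral} from the discrete setting, transporting it through the Grothendieck spectral sequence machinery. The entire argument has already been set up in the paragraph preceding the statement: we have the factorization
\[
\Hom_{\scriptsize{\modd}_{\mathbf{G}}^{\sigma}}(\ka,-)=(-)^{\sigma_M}\circ (-)^{{\mathbf{G}}},
\]
where the inner functor $(-)^{{\mathbf{G}}}:\catgrs\to\modd_{\ka[x]}$ lands in the category of difference modules (because $M^{\mathbf{G}}$ is $\sigma_M$-stable by condition $(*)$), and the outer functor $(-)^{\sigma_M}:\modd_{\ka[x]}\to\modd_{\ka}$ is the invariants functor computing difference cohomology as in Proposition \ref{gencoh}. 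Both are left exact. The plan is to apply the Grothendieck spectral sequence to this composite and then show it degenerates to the desired short exact sequence.

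\textbf{Key steps.} First I would verify the acyclicity hypothesis needed for the Grothendieck spectral sequence, namely that $(-)^{\mathbf{G}}$ sends injective objects of $\catgrs$ to objects acyclic for $(-)^{\sigma_M}$. This is handled by the observation already recorded before the statement: an injective cogenerator of $\catgrs$ has the form $I\ot\ka[\mathbf{G}]$ for $I$ injective in $\modd_{\ka[x]}$, and $(I\ot\ka[\mathbf{G}])^{\mathbf{G}}=I$ is again injective, hence $(-)^{\mathbf{G}}$ preserves injectives. In particular injectives are sent to injectives, which are $(-)^{\sigma_M}$-acyclic. Second, I would write down the second page of the resulting spectral sequence, which converges to $H^{p+q}_{\sigma}(\mathbf{G},M)$ and has the form
\[
E_2^{pq}=\ext^p_{\scriptsize{\modd}_{\ka[x]}}\!\left(\widetilde{\ka},\,H^q(\mathbf{G},M)\right),
\]
where $H^q(\mathbf{G},M)=R^q(-)^{\mathbf{G}}$ is the ordinary rational cohomology, carrying the difference structure described just before Theorem \ref{spectral}. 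Third, I would invoke Proposition \ref{gencoh} applied to the difference $\ka$-module $H^q(\mathbf{G},M)$: it tells us that only the columns $p=0$ and $p=1$ survive, with
\[
E_2^{0q}=H^q(\mathbf{G},M)^{\sigma}\qquad\text{and}\qquad E_2^{1q}=H^q(\mathbf{G},M)_{\sigma},
\]
and all higher $\ext$-groups vanish. Since the page is concentrated in two adjacent columns, every differential is forced to be zero, the sequence degenerates at $E_2$, and the two-step filtration of $H^j_{\sigma}(\mathbf{G},M)$ yields precisely the short exact sequence in the statement.

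\textbf{Main obstacle.} The conceptually delicate point is not the formal degeneration—that is automatic once the $E_2$ page has only two columns—but rather confirming that the invariants functor $(-)^{\mathbf{G}}$ genuinely factors through the difference category $\modd_{\ka[x]}$ with the \emph{correct} difference structure, so that applying Proposition \ref{gencoh} to $H^q(\mathbf{G},M)$ is legitimate. This requires that the $\sigma$-action on $R^q(-)^{\mathbf{G}}=H^q(\mathbf{G},M)$ computed via the derived functor agrees with the explicit endomorphism (restriction along $\sigma_{\mathbf{G}}$ followed by $(\sigma_M)_*$) identified before Theorem \ref{spectral}. This is a naturality check on the derived functor, entirely parallel to the discrete case, and I do not expect genuine difficulty—the point is that one must be careful that the whole comparison takes place inside $\modd_{\ka[x]}$ rather than merely in $\modd_{\ka}$, since $\ext^{>1}$ vanishing in Proposition \ref{gencoh} is precisely the input that collapses the spectral sequence.
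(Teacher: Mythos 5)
Your proposal is correct and takes essentially the same approach as the paper: the paper's proof of Theorem \ref{ratseq} consists precisely of the remark that the proof of Theorem \ref{spectral} carries over, with the ring $A[\sigma_A]$ replaced by $\ka[x]$ and discrete cohomology replaced by rational cohomology, which is exactly the Grothendieck spectral sequence argument you describe, built on the factorization $\Hom_{\modd_{\mathbf{G}}^{\sigma}}(\ka,-)=(-)^{\sigma_M}\circ(-)^{\mathbf{G}}$, the preservation of injectives via $(I\ot\ka[\mathbf{G}])^{\mathbf{G}}=I$, and the two-column degeneration forced by Proposition \ref{gencoh}.
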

\begin{proof}
	The proof of Theorem \ref{spectral} carries over to this situation replacing the ring $A[\sigma_A]$ with the ring $\ka[x]$ and the
	discrete  cohomology with the rational cohomology.
	\end{proof}
\begin{example}
We compute rational difference cohomology in the following special case. As a difference rational group, we consider the additive group scheme $\ga$ over $\Ff_p$ ($p>2$) with the Frobenius endomorphism $\fr$, and we take the trivial difference rational $(\ga,\fr)$--module $(\Ff_p,\mbox{id})$.

The ring $H^*(\ga,\Ff_p)$ was computed in \cite[Theorem 4.1]{ratgen} together with a description of the rational action of $\gm$.  In particular, $H^1(\ga,\Ff_p)$ is an infinite dimensional vector space over $\Ff_p$ with a basis $\{a_i\}_{i\geqslant 0}$,
%, and the scalars act on $a_i$ with the weight $-p^i$. Hence, we see that $a_i$
which can be  chosen in such a way that in the action of $\Ff_p[\sigma]$ ($\simeq \Ff_p[x]$) on $$H^1(\ga,\Ff_p)=\Hom(\ga,\ga),$$
 we have $\sigma(a_i)=a_{i+1}$.

 Thus we see that $H^1(\ga,\Ff_p)^{\sigma}=0$
 and $\dim(H^1(\ga,\Ff_p)_{\sigma})=1$. Since $\sigma$ acts trivially
 on $H^0(\ga,\Ff_p)$, we get $\dim(H^0(\ga,\Ff_p)_{\sigma})=1$, and we obtain by Theorem  \ref{ratseq} that
 $$\dim(H^1_{\sigma}(\ga,\Ff_p))=1.$$
 In order to extend our computation, we will use the following description of the graded ring $H^*(\ga,\Ff_p)$ from \cite[Thm 4.1]{ratgen}:
$$H^*(\ga,\Ff_p)\simeq \Lambda(H^1(\ga,\Ff_p))\ot S(\widetilde{H}^1(\ga,\Ff_p)),$$
where $\Lambda$ and $S$ stand respectively for the exterior and symmetric algebra over $\Ff_p$, $\widetilde{H}^1(\ga,\Ff_p)$ is a space with a basis
$\{a_i\}_{i\geqslant 1}$ and its non-zero elements have degree $2$.
Since $\fr$ commutes with algebraic group homomorphisms, the action
 of $\sigma$ on $H^*(\ga,\Ff_p)$ is multiplicative. Hence $\sigma$ acts on
 decomposable elements of  $H^*(\ga,\Ff_p)$ diagonally. Therefore, we have that
$H^j(\ga,\Ff_p)^{\sigma}=0$  for all $j>0$, and we obtain by Theorem  \ref{ratseq} that
\[H^j_{\sigma}(\ga,\Ff_p)\simeq H^{j-1}(\ga,\Ff_p)_{\sigma}\]
for all $j>0$. Taking these facts into account, we can summarize our computations as follows:
\[\dim(H^j_{\sigma}(\ga,\Ff_p))=
\left\{
\begin{array}{ll}
1 & \mbox{for $j=0,1,2$};\\
%1 & \mbox{for $j=1$},\\
%1 & \mbox{for $j=2$},\\
\infty & \mbox{for $j>2$.}
 \end{array}
 \right.\]
This final outcome may look a bit bizarre, but it coincides with the general philosophy that ``invariants reduce the infinite part of the difference dimension by 1'' (this can be made precise using the notion of an SU-rank, see \cite[Section 2.2]{acfa1}).
 \end{example}
 Continuing the analogy with the discrete situation, we can apply Theorem \ref{ratseq} to the induced difference rational module $M^{\infty}$ (see Example \ref{3ex}(3)). We define, analogously to the discrete case, the ``stable rational cohomology groups'' as:
\[H^{j}_{\mathrm{st}}(\mathbf{G},M):=\underset{i}{\colim} H^j(\mathbf{G},M^{(i)}).\]
Similarly as in Theorem \ref{disctelthm}, we obtain the following.
\begin{theorem}\label{rasctelthm}
	For any rational $\mathbf{G}$--module $M$ and $j>0$, there is an isomorphism:
	$$H^j_{\sigma}(\mathbf{G},M^{\infty})\simeq H^{j-1}_{\mathrm{st}}(\mathbf{G},M).$$
\end{theorem}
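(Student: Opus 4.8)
The plan is to transport the argument for Theorem \ref{disctelthm} verbatim into the rational setting, using the short exact sequence of Theorem \ref{ratseq} in place of the one from Theorem \ref{spectral}. The entire proof reduces to identifying the invariants and coinvariants of the difference structure carried by the rational cohomology of $M^{\infty}$.

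First I would recall from Example \ref{3ex}(3) that, as a rational $\mathbf{G}$--module, $M^{\infty}=\bigoplus_{i\geqslant 0}M^{(i)}$, and that its difference structure $\sigma_{M^{\infty}}$ is the right-shift inclusion sending the $i$-th summand into the $(i+1)$-st. Next I would use that rational cohomology commutes with direct sums, giving
\[H^*(\mathbf{G},M^{\infty})\simeq \bigoplus_{i\geqslant 0}H^*(\mathbf{G},M^{(i)}).\]
Under this identification the difference endomorphism on $H^*(\mathbf{G},M^{\infty})$ (the composite of restriction along $\sigma_{\mathbf{G}}$ with the map induced by $\sigma_{M^{\infty}}$, exactly as described in the discrete case before Theorem \ref{spectral}) restricts on each summand to the stabilization map
\[\sigma_*:H^*(\mathbf{G},M^{(i)})\ra H^*(\mathbf{G},M^{(i+1)}).\]

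The key observation is then purely formal. For a direct sum $\bigoplus_{i\geqslant 0}V_i$ equipped with such a shift endomorphism, the invariants vanish (an invariant element would have to be supported arbitrarily far to the right), while the coinvariants are precisely the colimit of the directed system $V_0\ra V_1\ra\cdots$, since the relation $\sigma(x)-x$ defining the coinvariants is exactly the relation defining the colimit. Applying this to $V_i=H^{j}(\mathbf{G},M^{(i)})$ yields $H^{j}(\mathbf{G},M^{\infty})^{\sigma}=0$ together with
\[H^{j-1}(\mathbf{G},M^{\infty})_{\sigma}\simeq \underset{i}{\colim}\,H^{j-1}(\mathbf{G},M^{(i)})=H^{j-1}_{\mathrm{st}}(\mathbf{G},M).\]
Feeding these into the short exact sequence of Theorem \ref{ratseq} collapses its right-hand term and produces the claimed isomorphism $H^j_{\sigma}(\mathbf{G},M^{\infty})\simeq H^{j-1}_{\mathrm{st}}(\mathbf{G},M)$ for $j>0$.

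The only step that genuinely requires the rational (rather than discrete) hypotheses is the commutation of $H^*(\mathbf{G},-)$ with the infinite direct sum $\bigoplus_i M^{(i)}$; for finite sums it is automatic, but here I would justify the infinite case by computing rational cohomology through the Hochschild cochain complex $-\ot\ka[\mathbf{G}]^{\ot\bullet}$, which manifestly preserves direct sums. Everything else is formal and transfers word-for-word from the discrete situation, so I expect this compatibility with direct sums to be the only real point of content in the proof.
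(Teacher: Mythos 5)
Your proposal is correct and follows essentially the same route as the paper: the paper proves Theorem \ref{rasctelthm} by transporting the argument given before Theorem \ref{disctelthm} (decompose $H^*(\mathbf{G},M^{\infty})$ as $\bigoplus_i H^*(\mathbf{G},M^{(i)})$ with the shift action, note the invariants vanish and the coinvariants give the colimit, then apply the short exact sequence of Theorem \ref{ratseq}). Your explicit justification that rational cohomology commutes with infinite direct sums via the Hochschild complex is a detail the paper leaves implicit, and it is a welcome one, since that is indeed the only point where the rational setting needs its own argument.
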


\section{Applications, alternative approaches and possible generalisations}\label{secratcat2}

In this section, we discuss  applications of our results to the problem of comparing rational and discrete group cohomology.
We also compare our approach with  the theories of difference representations in \cite{Kam} and \cite{Wib1},  and sketch another (in a way more ambitious) approach to difference representations.

\subsection{Comparison with earlier approach to difference representations}\label{secwib}
Let us compare our construction of difference representations with the existing theories of representations of difference groups in \cite{Wib1} and \cite{Kam}. One sees that Lemma 5.2 from \cite{Wib1} amounts to saying that the category of difference rational representations of $(\mathbf{G},\sigma_{\mathbf{G}})$ considered in \cite{Wib1} is
equivalent to the category of rational representations of $\mathbf{G}$. In fact, in the approach from  \cite{Wib1} and \cite{Kam}, the  difference structure on $\mathbf{G}$ is not encoded in a single representation but rather in some extra structure
on the whole category of representations, namely in the functor
$M\mapsto M^{(1)}$ which twists the $\mathbf{G}$--action  by $\sigma_{\mathbf{G}}$. For example, when the difference group is reconstructed from its representation category through the Tannakian formalism (see \cite{Kam}), this extra structure is used in an essential way.  Hence our approach is, in a sense, more direct. In particular, it allows us to introduce the difference group
cohomology which differs from the cohomology of the underlying
algebraic group. Actually, both of the approaches  build on the same
structure.  Abstractly speaking, we have a category $\mathcal{C}$ with endofunctor
$F$. Then one can consider just the category $\mathcal{C}$ and investigate the effect
of the action of $F$ on it; this is, essentially, the approach initiated in
\cite{Wib1} and \cite{Kam}. On the other hand, one can
introduce, like in our approach, the category $\mathcal{C}^F$, whose objects are the arrows
$$\sigma_M: F(M)\ra M$$
for $M\in \mathcal{C}$. This approach
generalizes the first one, since the construction $M^{\infty}$
(which can be performed in any category with countable coproducts) produces
a faithful functor
$$\mathcal{C}\ra \mathcal{C}^F.$$
On the other hand, our functor $\sigma \mathrm{ind}_{\mathbf{G}}$  produces important objects like injective cogenerators which do not come from $\mathcal{C}$, hence this approach is potentially more flexible and rich.

\subsection{Comparing cohomology, inverting Frobenius and spectra}\label{ccfs}
As we have already mentoned in Introduction, the main motivation for the present work was its possible application to the problem of comparing rational and discrete cohomology. More specifically, let $\mathbf{G}$ be an affine group scheme
defined over $\Ff_p$ and let $M$ be a rational $\mathbf{G}$--module. Then, it is natural to compare the rational cohomology groups $H^j(\mathbf{G},M)$ and
the discrete cohomology groups $H^j(\mathbf{G}(\Ff_{p^n}),M)$. For $\mathbf{G}$
reductive and split over $\Ff_p$, the comparison is given by the celebrated
Cline--van der Kallen--Parshall--Scott theorem  \cite{ratgen} saying that
\[H^j_{\mathrm{st}}(\mathbf{G},M):=\underset{i}{\colim}\ H^j(\mathbf{G},M^{(i)})\simeq \lim_n H^j(\mathbf{G}(\Ff_{p^n}),M),\]
and that the both limits stabilize for any fixed $j\geqslant 0$. Then it was observed \cite[Theorem 4(d)]{arcata} that the right-hand side above (called sometimes \emph{generic cohomology}) coincides with the discrete group cohomology $H^j({\mathbf{G}}(\bar{\Ff}_p),M)$. Our work allows one
to interpret the left-hand side as a right derived functor as well (see Theorem \ref{rasctelthm}). We hope to use  this description in a future work aiming to generalize the main theorem from \cite{ratgen}  to non--reductive algebraic groups.
We expect a theorem on difference cohomology expressing generic cohomology
as a sort of completion of rational cohomology. We hope that the comparison on
difference level should be easier because the limit with respect to the twists is
build--in into the difference theory. Then, one could obtain the theorem on algebraic
groups by taking the $M^{\infty}$--construction (we recall that there is no need for taking ``stable discrete cohomology'' because the Frobenius morphism on a perfect field is an automorphism). This is a subject of our future work.

We would like to point out certain unexpected similarities between Hrushovski's work \cite{HrFro} and the homological results from \cite{ratgen}. In both cases, the situation somehow ``smoothes out'' after taking higher and higher powers of Frobenius. It is visible in the twisted Lang-Weil estimates from \cite[Theorem 1.1]{HrFro} and in the main theorem of \cite{ratgen} above.

At the time being, we can offer another heuristic reasoning supporting our belief that the difference formalism is an adequate tool for the problem of comparing rational and discrete cohomology. Namely, the principal reason why one should not hope for the existence of an isomorphism between rational and generic group cohomology in general is the fact that the Frobenius morphism becomes an automorphism after restricting to the group of rational points over a perfect field. Hence we have
\[H^*(\mathbf{G}(\Ff_q),M)\simeq H^*(\mathbf{G}(\Ff_q),M^{(1)}),\]
while, in general,  there is no reason for the map
\[\sigma_*:H^*(\mathbf{G},M)\ra H^*(\mathbf{G},M^{(1)})\]
to be an isomorphism.  However, the colimit defining  $H^*_{\mathrm{st}}(\mathbf{G},M)$ can be thought of
as the result of making the map $\sigma_*$ invertible (see an example of this phenomenon in Remark \ref{stvsgen}(2)). On the other hand, the process of
inverting the endomorphism $\sigma$ is build in the homological algebra of left
difference modules through the construction of the module $\widetilde{R}$ defined in Section \ref{secaem}. This supports our belief that the category of left difference modules is a relevant tool in this context.

Actually, the first author succeeded in making the connection between the stable cohomology
and the process of inverting Frobenius morphism more precise in an
important special case (see \cite{Chal1}). To explain this idea better, let us come back for a moment to a general categorical context of Section \ref{secwib}.
We assume that we have a category $\mathcal{C}$ with an endofunctor $F$ and a family $\{\mathcal{C}_j\}_{j\in\Zz}$ of full orthogonal
subcategories such that any object in $\mathcal{C}$ is a a direct sum of
objects from $\{\mathcal{C}_j\}_{j\in\Zz}$. Thus we have an equivalence of categories
\[{\mathcal C}\simeq\prod_{j\in  \Zz}{\mathcal C}_j.\]
Moreover, we assume that $F$ takes $\mathcal{C}_j$ into $\mathcal{C}_{pj}$.
This situation is quite common in representation theory
over $\Ff_p$. For example, any central element
of infinite order in ${\bf G}$ produces such a decomposition of the category
of rational representations of ${\bf G}$ with $F$ being the functor of twisting
by the Frobenius morphism (see e.g. Proposition \ref{rgm}). Then we can grade the category
\[{\mathcal C}^*:=\prod_{j\neq 0}{\mathcal C}_j\]
by positive integers, putting
\[{\mathcal C}^*_i:=\prod_{d\in Y}{\mathcal C}_{p^i d}\]
for $i\geqslant 0$, where $Y:=\Zz\setminus p\Zz$.
Let us take now $M=\bigoplus_{i\geqslant 0} M_i$, where $M_i\in\mathcal{C}^*_i$.
Then we see that an object in $(\mathcal{C}^*)^F$ is just a sequence of maps
\[F(M_{i})\ra M_{i+1},\]
hence it produces a
``spectrum of objects of ${\mathcal C}^*$'' \cite{Hov}.
The formalism of spectra is a classical tool which is used to formally invert an endofunctor,
hence it fits well into our context. In \cite{Chal1}, the first author considered ${\mathcal C}$ as the category $\widehat{{\mathcal P}}$ of ``completed'' strict polynomial functors in the sense of \cite{FrSu}, which is closely related to the category of representations
of ${\bf GL}_n$. The category $\widehat{{\mathcal P}}$ has an orthogonal decomposition
\[\widehat{{\mathcal P}}\simeq\prod_{j\geqslant 0}{\mathcal P}_j\]
into the subcategories of strict polynomial functors homogeneous of degree $j$, and $F$ is the ``precomposition with the Frobenius twist functor''.

The first author managed to find \cite[Cor. 4.7]{Chal1} an interpretation of ``stable Ext--groups'' in $\mathcal{P}$ in terms of Ext--groups
in the corresponding category of spectra.  He also  obtained  a version of the main theorem of \cite{ratgen} in ${\mathcal P}$
as an analogue of the Freudenthal theorem \cite[Thm. 5.3(3)]{Chal1}.

Let us now try to compare spectra and difference modules in general. Although the starting categories are very close,
one introduces homological structures in each case in a different way. Namely, in the case of the category of spectra, the formalism of Quillen model categories is used, while in the case of the category of difference modules, we just use its obvious structure of an Abelian category. The important point here is that the resulting Ext--groups are not the same, since in the interpretation of stable cohomology in terms of difference cohomology there is a shift of degree (see Theorem \ref{rasctelthm}). Hence, the relation between these two constructions remains quite mysterious.

\subsection{Functors on the category of difference algebras}\label{secfunc}
We finish our paper with discussing another version of the notion of a difference rational representation. In fact, there is a certain ambiguity  at the very core of difference algebraic geometry. Namely, there are two natural choices for the kind of functors which could be considered as difference schemes:
\begin{enumerate}
\item functors from the category of rings to the category of difference sets;
\item functors from the category of difference rings to the category of sets.
\end{enumerate}
In the case of representable functors (i.e. affine difference schemes) both of the choices above are equivalent by the Yoneda lemma. Thanks to this, a difference group scheme can be unambiguously defined as (the dual of) a difference Hopf algebra. Unfortunately, this ``several choices'' problem re-appears when
one tries to introduce the appropriate notion of a difference representation.
In fact, we made in Section \ref{secratcat1} the ``first choice'' which is simpler and sufficient for the main objectives of our article.  The drawback of this approach is that the difference structure on the module $M(A)$ from Section \ref{sec42} does not depend on
 a possible difference structure on $A$. In other words: there is no natural way of
 turning the functor $M$ into a functor on the category of difference $\ka$--algebras.
 For this reason, the framework of Section \ref{secratcat1} is less general than the one in Section \ref{discat}.
 %To put it simply: ``we do not have endomorphisms on scalars''.
 Thus, it would be tempting to  introduce the notion of a difference rational representation corresponding to the ``second choice'' above.

We will outline now an alternative approach, which is potentially richer but is also much more involved technically.  We fix a difference
field $(\ka,\sigma)$ and consider the category $\Alg_{(\ka,\sigma)}$ of
difference commutative algebras over $\ka$ as in Section \ref{secdag}.
Then, undoubtedly, we want  our  difference representation to be some sort of a functor
\[M: \Alg_{(\ka,\sigma)}\ra \Mod_{\ka[\sigma]},\]
such that $M(A)$ is naturally  a difference $(A,\sigma_A)$--module. Now we need an analogue of the fact that an ordinary
rational representation sends a \ka--algebra $A$ to $A\ot M(\ka)$.  A reasonable choice here seems to be the following:
\[M(A)=A[\sigma]\ot_{\ka[\sigma]} M(\ka),
\]
 since in that case the structure
of an $A[\sigma]$--module on $M(A)$ depends both on $(A,\sigma_A)$
and on $(M,\sigma_M)$.

When we add to this framework a group action, we obtain the following definition.
\begin{definition}\label{defotther}
Let $(\mathbf{G},\sigma_{\mathbf{G}})$ be a difference algebraic group.
We call a  functor
\[M: \Alg_{(\ka,\sigma)}\ra \Mod_{\ka[\sigma]},\]
such that \[M(A)=A[\sigma]\ot_{\ka[\sigma]} M(\ka)\]
a \emph{$\mathbf{G}$--difference representation} (or a \emph{$\mathbf{G}$--difference module}),
 if there is a natural (in $A\in \Alg_{(\ka,\sigma)}$) structure of a difference $A[\mathbf{G}(A)]$--module on $M(A)$.
\end{definition}
 With the above definition, we achieve the level of generality we had in the
discrete case of Section \ref{discat}. However, in order to make the category of such difference representations usable, one
would like to obtain its algebraic description in terms of comodules over coalgebras etc. Unfortunately, the formulae we have obtained so far are quite complicated and do not fit easily into known patterns. For example, it is not
clear how to use them even to show  that  the category under consideration
has enough injective objects. For this reason, in this paper, we decided to adopt the approach corresponding to the ``first choice''.

\bibliographystyle{plain}
\bibliography{harvard}

\end{document}